\documentclass{article}
\usepackage{amsmath,amsgen,amstext,amsbsy,amssymb,amsopn,amsfonts,amsthm}
\usepackage{graphics,graphicx,color,lscape,subfigure,chngcntr}
\usepackage{authblk,scalerel}

\graphicspath{{graphics/}}
\counterwithin{figure}{section}

\newtheorem{Def}{Definition}[section]
\newtheorem{Thm}[Def]{Theorem}
\newtheorem{Prp}[Def]{Proposition}
\newtheorem{Lem}[Def]{Lemma}

\def\ar{\scalerel*{\includegraphics{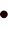}}{]}}
\def\arx{\scalerel*{\includegraphics{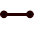}}{]}}

\def\arxxxx{\scalerel*{\includegraphics{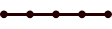}}{]}}
\def\arxxxxn{\scalerel*{\includegraphics{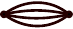}}{]}}
\def\arxxxnx{\scalerel*{\includegraphics{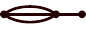}}{]}}
\def\arxxxnxn{\scalerel*{\includegraphics{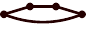}}{]}}
\def\arxxnxx{\scalerel*{\includegraphics{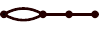}}{]}}
\def\arxxnxxn{\scalerel*{\includegraphics{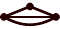}}{]}}
\def\arxxnxnx{\scalerel*{\includegraphics{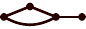}}{]}}
\def\arxxnxnxn{\scalerel*{\includegraphics{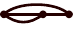}}{]}}
\def\arxxuxx{\scalerel*{\includegraphics{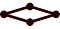}}{]}}
\def\arxxuxxn{\scalerel*{\includegraphics{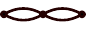}}{]}}

\begin{document}

\title{Generic Orthotopes}

\author[$\star$]{David Richter}

\affil[$\star$]{Department of Mathematics, Western Michigan University, Kalamazoo MI 49008 (USA),
\texttt{david.richter@wmich.edu}}

\maketitle

\begin{abstract}
This article studies a large, general class of orthogonal polytopes
which we may call {\it generic orthotopes}.  
These objects emerged from a desire to represent a Coxeter complex by an
orthogonal polytope that is particularly
nice with respect to traditional topological, structural, or combinatorial considerations.
Generic orthotopes have a pleasant ``homogeneity'' property, somewhat like a
smoothly bounded compact subset of Euclidean space.  Thus, as soon as we demand that
every vertex of an orthogonal polytope be a floral arrangement, as defined here,
many derivative structures such as faces and cross-sections are also described by floral arrangements.
We also give formulas for the volume and Euler characteristic of a generic
orthotope using a couple of statistics that are defined naturally for floral arrangements.
\end{abstract}

\section{Introduction}

Suppose $d$ is a non-negative integer.  By an {\it orthogonal polytope} we mean a union of finitely many
axis-aligned boxes in Euclidean space $\mathbb{R}^d$.  
This article lays a foundation for a theory of 
a particular set of orthogonal polytopes which represents an elementary generalization of the $d$-dimensional
cube to $d$-dimensional orthogonal polytopes.  
We summarize the  salient properties of these ``generic orthotopes'':

\begin{itemize}

\item
Every face of a generic orthotope is a generic orthotope.

\item
Every orthographic cross-section of a generic orthotope is a generic orthotope.

\item
The vertex figure of every vertex a generic orthotope is a simplex.

\item
The 1-dimensional skeleton of a generic orthotope is a bipartite $d$-regular graph.

\item
There are elementary formulas which relate the volume and Euler characteristic
of a generic orthotope.

\item
The structural and combinatorial properties of a generic orthotope remain intact through small perturbations of their facets.

\item
We may approximate any compact subset of Euclidean space to any degree of accuracy with a generic orthotope.

\end{itemize}

Notice that all but the last of these properties 
remain valid when ``generic orthotope'' is replaced by
the word ``cube''.  We establish all of these properties in this article. 
Moreover, by what seems like good fortune, all of these properties follow in an elementary
manner, given an understanding of the local structure of a generic orthotope.

The local structure of a generic orthotope has
a convenient construction using read-once Boolean functions.
Thus, one finds ``floral arrangements'' and ``floral vertices''
at the core of this theory, where
a floral arrangement is determined by applying a read-once Boolean function to a set
of half-spaces possessing distinct supporting hyperplanes.  
We may encode a read-once Boolean function by a series parallel diagram, and
this leads to another bit of good fortune:  We may use a topological invariant of these
diagrams, namely the number of loops modulo 2, to obtain an expression
for the Euler characteristic of a generic orthotope using only the values of this invariant
at its vertices.
Our statement of this formula appears below as Theorem \ref{eulercharthm}.
If one accepts the thesis that generic orthotopes are analogous to
smoothly-bounded subsets of Euclidean space,
then one cannot help but recognize the similarity
of this formula to the Poincar\'e-Hopf theorem which expresses the Euler characteristic
as the sum of indices of a vector field with a finite number of singularities.

The emergence of generic orthotopes is somewhat convoluted.  The original
motivation came from this author's desire to represent 
Coxeter complexes by orthogonal polytopes
which are somehow ``nice''.  In conceiving this problem, however, 
it was not clear what ``nice'' should mean with regards to orthogonal polytopes.  
This author regards convex polytopes which are {\it simple}
(having exactly $d$ edges at every vertex) as particularly ``nice'', but
it was not a priori clear what higher structure one might borrow to study orthogonal polytopes.
The present article precisely develops what ``nice'' should mean for an
orthogonal polytope, and we pose the general problem for
Coxeter complexes in the concluding section.

\subsection{Examples in Low Dimensions}

In order to illustrate the main ideas of this article, we consider the cases $d=2$ and $d=3$.

In two dimensions, we draw a contrast between the polygons which appear
in Figure \ref{orthogons}; one of these is homeomorphic to a disc, while the
other has ``singular points'' where the boundary is self-intersecting.
Using the terminology developed here, the former of these is a generic orthotope
and the latter is not.  
One may relate the numbers of corners of the two types that one sees in a
generic orthogon.  In the polygon on the left in Figure \ref{orthogons}, one
notices that there are $n_1=9$ corners that ``point outward'' and $n_3=5$ corners
that ``point inward''.  The authors of \cite{DV_2005,BDPR_2007} call these ``salient'' and
``reentrant'' points, respectively.  The subscripts 1 and 3 here specify the number
of quadrants occupied by the polygon at that type of vertex.  An immediate corollary
of the 2-dimensional version of our formula in Theorem \ref{eulercharthm} is that
one always has $n_1-n_3=4$ for every generic orthogon.

\begin{figure}
    \centering
    \subfigure[]{\includegraphics[width=0.25\textwidth]{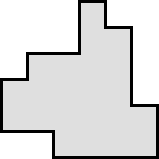}} 
    \hspace{0.05\textwidth}
    \subfigure[]{\includegraphics[width=0.25\textwidth]{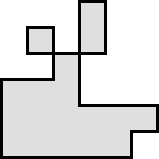}} 
    \caption{(a) A generic orthogon.  (b) Self-intersecting boundary.}
    \label{orthogons}
\end{figure}

As one would expect, the situation when $d=3$ is more complicated.
In the terminology developed here, each of the vertices which appear
in Figure \ref{good3dvertices} is a floral vertex.  Superficially,
the properties that make these vertices ``nice'' are 
(a) there are easily identified faces incident to the vertex and (b) the faces
incident to each vertex coincide with the face lattice of a 2-dimensional simplex
(i.e.\ a triangle).  By contrast, if a 3-dimensional orthogonal polytope has
a ``degenerate vertex'' as one appearing in Figure \ref{false3dvertices}, then we
do not regard it as a generic orthotope.  One can quickly conceive of
other kinds of degenerate points in 3 dimensions, and one imagines that the
number of types of degeneracies that might arise when $d\geq 4$ grows quickly, perhaps exponentially.
Up to congruence, the only types of floral vertices when $d=3$ appear in Figure \ref{good3dvertices}.

\begin{figure} 
\centering 
\includegraphics[width=0.75\textwidth]{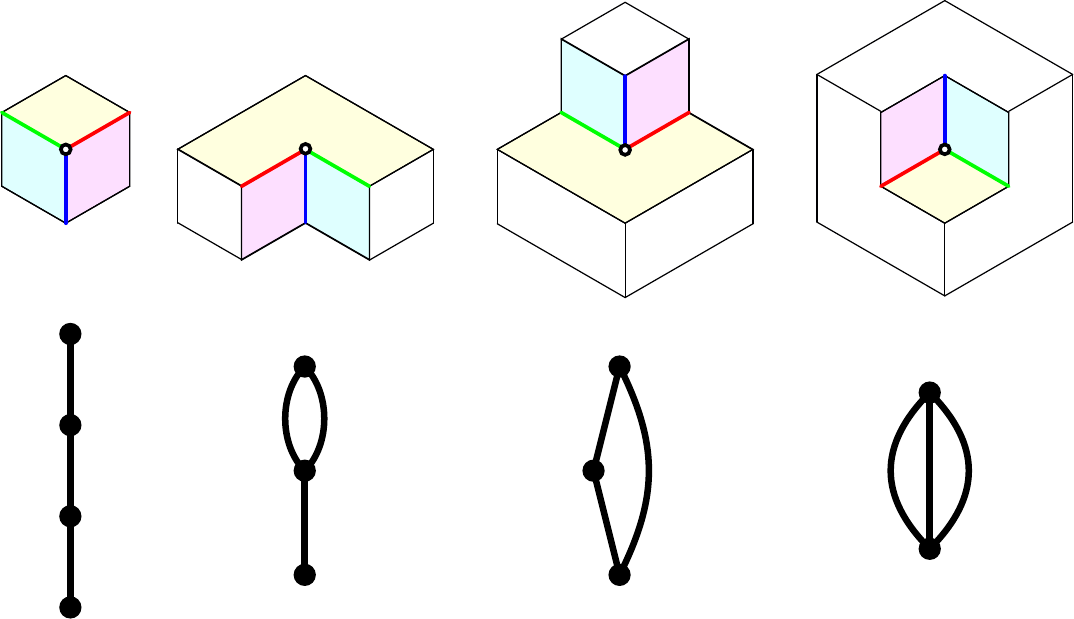}
\caption{Three-dimensional floral vertices.}
\label{good3dvertices}
\end{figure}

\begin{figure} 
\centering 
\includegraphics[width=0.5\textwidth]{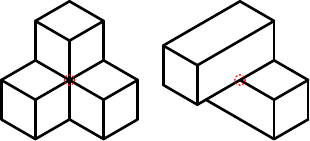}
\caption{Degenerate vertices in 3 dimensions.}
\label{false3dvertices}
\end{figure}

\begin{figure}
$$S=\left\{\begin{array}{ccccccc}
(0,0,0), & (1,0,0), & (2,0,0), & (3,0,0), & (4,0,0), & (0,1,0), & (1,1,0), \\
(2,1,0), & (3,1,0), & (0,2,0), & (2,2,0), & (0,3,0), & (1,3,0), & (2,3,0), \\
(1,0,1), & (2,0,1), & (3,0,1), & (4,0,1), & (2,1,1), & (3,1,1), & (2,2,1), \\
(0,3,1), & (1,3,1), & (2,3,1), & (1,0,2), & (2,0,2), & (3,0,2), & (4,0,2) \\
\end{array}\right\}.$$
\caption{Generating corners of the example.}
\label{examplecorners}
\end{figure}

We may relate the numbers of congruence types of vertices which appear in such
a polytope.  Thus, suppose $P$ is a 3-dimensional orthogonal polytope 
that such that every vertex appears as one of the four congruence types
as depicted in Figure \ref{good3dvertices}.  
For each $i\in\{1,3,5,7\}$, let $n_i$
denote the number of vertices of these corresponding types, where $i$ indicates the
number of octants occupied by its tangent cone.  Then Theorem \ref{eulercharthm} yields,
for $d=3$,
$$n_1-n_3-n_5+n_7=8\chi(P),$$
where $\chi(P)$ is the (combinatorial) Euler characteristic of $P$.

We illustrate some of these ideas with an example.
Define an orthogonal polytope by $P=\bigcup_{v\in S}\left(v+[0,1]^3\right)$,
where $S\subset\mathbb{R}^3$ appears in Figure \ref{examplecorners},
and $v+[0,1]^3$ denotes the translation of the unit cube $[0,1]^3$ by adding $v$.
One should imagine $P$ as an assembly of several 
stacks of unit cubes resting ``skyscraper style'' on a flat surface representing the $(x,y)$-plane
in $\mathbb{R}^3$.  A view of $P$ ``from above'' appears in Figure \ref{exampleehrhart}.  
The numbers which appear in the figure give the heights of these stacks. 
In order to see that $P$ is a generic orthotope, notice
that every vertex of $P$ is congruent to one vertices appearing
in Figure \ref{good3dvertices}.   Here, we have $n_1=15$, $n_3=11$, $n_5=5$, $n_7=1$, and thus
$$n_1-n_3-n_5+n_7=15-11-5+1=0,$$
which we expect because $P$ is homeomorphic to a solid 3-dimensional torus.

\begin{figure} 
\centering 
\includegraphics[width=0.333\textwidth]{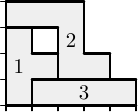}
\caption{A 3-dimensional generic orthotope.}
\label{exampleehrhart}
\end{figure}

\subsection{Flowers}

Dating to the 1950's, the Kneser-Poulsen conjecture asserts roughly that the union of a finite set of Euclidean balls
cannot increase if the distances between their centers decrease or remain equal.
In their work on this problem for general $d$, Bezdek and Connelly \cite{BC_2008} demonstrated
equivalences between various statements which generalize the conjecture to flower
weight functions.  In their terminology, which can be traced to work by 
Csik\'{o}s \cite{csikos_2001} and earlier to Gordon
and Meyer \cite{GM_1995}, a ``flower'' is a subset of Euclidean space that is obtained
by applying a certain type of Boolean function to a collection of balls.
In their description, Bezdek and Connelly used the phrase ``exactly once'' to describe
the variables used for the Boolean functions which they employed to define flowers.
This led the present author to \cite{CH_2011}, where such functions are described as 
``read-once Boolean functions'', terminology which this author employs throughout.
Thus, the underlying construction of a floral arrangement as defined here
is essentially identical with the construction of flowers.

We can see how to use read-once functions to define 3-dimensional floral vertices.
Thus, denote $H_1=\{(x,y,z):x\geq 0\}$, 
$H_2=\{(x,y,z):y\geq 0\}$, and $H_3=\{(x,y,z):z\geq 0\}$ as three 
half-spaces in $\mathbb{R}^3$.   Using union $\cup$ and $\cap$ for union and intersection
respectively, we may describe these four configurations by
$$H_1\cap H_2\cap H_3,\ 
(H_1\cup H_2)\cap H_3,\
(H_1\cap H_2)\cup H_3,\hbox{ and }
H_1\cup H_2\cup H_3.$$ 
By contrast, neither of the degenerate vertices depicted in Figure \ref{false3dvertices}
possesses such a representation.
The graphs which appear in Figure \ref{good3dvertices} are the representations of these
read-once functions by series-parallel diagrams, where $\cap$ is interpreted as series connection and
$\cup$ is interpreted as parallel connection.  In the formula $n_1-n_3-n_5+n_7=8\chi(P)$ for a
3-dimensional generic orthotope, the signs appearing as coefficients on $n_i$ are determined simply by
counting the number of loops in the corresponding diagrams modulo 2.

There is a substantial body of literature on read-once functions (with \cite{CH_2011} as a good
starting point), but most of this
is not relevant for the present work.  A reader of this work should acquaint themself with
the basic notions of read-once functions, as these are fundamental for the definitions
of floral arrangements and generic orthotopes.  
By a similar token, this article does not address the Kneser-Poulsen conjecture.
The works cited above are relevant only insofar as these led this author to incorporate
read-once functions into the theory of orthogonal polytopes.
 
\subsection{Contexts}

The audience for this work includes anyone who has interest in orthogonal
polytopes generally.
This author is impressed by the intrinsic
beauty of generic orthotopes and believes they are worthy of study for their own sake.
The contexts for orthogonal polytopes are certainly myriad, although 
we observe that there is a considerable gap in the theory
of general $d$-dimensional orthogonal polytopes.  Thus, whereas
many workers have studied
rectilinear polygons, polyominoes, polycubes, rectangular layouts, 
orthogonal graph drawings, orthogonal polyhedra/surfaces, $xyz$ polyhedra,
3D staircase diagrams, and so on, comparatively few have focused on the general case when
$d\geq 4$.

With that said, the literature on general $d$-dimensional orthogonal polytopes is 
not completely barren.
Breen has a series of studies, starting with \cite{breen_1996},
which purport to seek analogues of Helly's theorem on convex
sets in Euclidean space for orthogonal polytopes, extending work of
of Danzer and Gr\"unbaum \cite{DG_1982}.  
Starting approximately with \cite{AB_2006} and \cite{BBR_2010}, Barequet and several of his colleagues
have worked on enumerating polycubes (also known as lattice animals) 
for general $d$;
see the aforementioned articles and \cite{guttmann_2009} for more details, especially
as these problems arise in statistical physics.
Werman and Wright \cite{WW_2016} study probabilistic aspects of random cubical complexes;
their approach parallels and complements the present work as they also use the language of valuations for
subsets of $\mathbb{R}^d$.  

Bournez, Maler, and Pnueli \cite{BMP_1999}, concerned with devising ``hybrid systems'' in control theory and
recognizing a gap in the general theory, develop
algorithms for membership, face-detection and Boolean operations for representing these
systems by orthogonal polytopes.
Quoting from \cite{BMP_1999}, 
``Beyond the original motivation coming from computer-aided control system design, we 
believe that orthogonal polyhedra and subsets of the integer grid are fundamental 
objects whose computational aspects deserve a thorough investigation.''
This author believes that the present work advances this project significantly.
Like the present work, the authors of \cite{BMP_1999} use ideas from Boolean algebra, although
they do not use read-once functions to describe local structure of an orthogonal polytope.

P\'{e}rez-Aguila and his colleagues have devoted significant energies
to studying general orthogonal polytopes from the perspective of computer science and computer
engineering, \cite{PA_2006,PA_2010,PA_2011,PAAR_2008}.  
Their approach is largely founded on P\'{e}rez-Aguila's $d$-dimensional generalization
of the Extreme Vertices Model for 3-dimensional orthogonal polytopes
(cf. \cite{AA_2001}).  This model appears 
closely related to the treatment of floral vertices shown here.  Moreover, their
perspective also shares similar significant structural and combinatorial considerations of
orthogonal polytopes with this author.  These works also employ Boolean algebra extensively,
although again we notice a lack of emphasis on read-once Boolean functions in particular.

Orthogonal polytopes arise in toric geometry, where they are called ``staircases''.
A basic idea in this theory is that we can gain some algebraic insight by modeling
a square-free monomial ideal in a polynomial ring by studying an associated orthogonal polytope
lying in the primary orthant (where all coordinates are non-negative) 
of $\mathbb{R}^d$.  The fascinating text \cite{MS_2005} expounds on these ideas at great length.
However, we stress that the orthogonal polytopes most often encountered in toric geometry appear to be ``totally
spherical'' in the sense that every face is homeomorphic to a closed cell, whereas the
objects studied here are considerably more general.

A generic orthotope shares some attributes with a smoothly bounded compact Euclidean set.
For example, the tangent cone at every point on the boundary of any given generic orthotope is homeomorphic
to a half-space.  Similarly, the bipartiteness of the 1-dimensional skeleton of a generic orthotope
is reminiscent of the orientability the boundary of a smoothly bounded compact set.
Moreover, our function $\sigma$ 
seems to measure a discrete analogue of curvature at each point.
In the same vein, we also mention this author's recent work
\cite{richter_rectangulations} on generic rectangulations.  In its roughest description, a generic rectangulation
is a subdivision of a rectangle of a rectangle into rectangles, where the descriptor ``generic''
means that no four constituent rectangles share a common corner.
In \cite{richter_rectangulations},
this author demonstrated that one may perform a ``central involution'' on a generic rectangulation
about any one of its consituent rectangles, analogous
to linear fractional transformations of the complex projective line $\mathbb{C}P^1$.
Since generic orthotopes are defined discretely, this indicates a context 
in discrete differential geometry.  

\subsection{Organization}

This article is organized as follows.  First there is a brief section on background on orthogonal
polytopes; this defines tangent cones and the face poset of an orthogonal polytope.
Following this are two long sections which describe the foundations of generic orthotopes.
These are divided according to local theory versus global theory.  
The section on local theory defines floral arrangements, studies
the structure of a floral arrangement, and introduces two ``local'' valuations $\mu$ and $\tau$ which
will be required in the section on global theory.  The section on global theory defines
generic orthotopes (in terms of floral arrangements), studies some of their properties,
and gives several formulas relating the volume and Euler characteristic of a generic orthotope.
We conclude with a few open questions about generic orthotopes.

Throughout this article, denote $\left[d\right]=\{1,2,3,...,d\}$.

\section{General orthogonal polytopes}

Denote the standard basis for $\mathbb{R}^d$ by $\{e_i:i\in\left[d\right]\}$, where $e_i$
is the unit vector pointing along the positive $x_i$-axis for each $i$.
The {\it cardinal directions} are the $2d$ vectors $\{\pm e_i:i\in\left[d\right]\}$.
The {\it cardinal ray of $\delta$} is the cone generated by the cardinal direction $\delta$.

For each $i\in\left[d\right]$ and for each $\lambda\in\mathbb{R}$,
let $\Pi_{i,\lambda}$ be the $(d-1)$-dimensional hyperplane defined by the equation $x_i=\lambda$,
(i.e.\ the null-space of the functional $x_i-\lambda$).  
Define the {\it $i$th canonical orthographic projection} by
$$\pi_i:(x_1,x_2,...,x_i,...,x_d)\mapsto(x_1,x_2,...,0,...,x_d).$$
We identify each hyperplane $\Pi_{i,\lambda}$ with $\mathbb{R}^{d-1}$ via the $i$th orthographic projection.
If $I\subset\left[d\right]$ is any subset and $\lambda:I\rightarrow\mathbb{R}$ is a tuple, let 
$$\Pi_{I,\lambda}=\bigcap_{i\in I} \Pi_{i,\lambda(i)}$$
be the {\it generalized hyperplane determined by $(I,\lambda)$}.

An {\it axis-aligned box}
is a cartesian product of closed intervals
$\prod_{i=1}^d\left[a_i,b_i\right]$
such that $a_i\leq b_i$ for all $i$ and
each interval $\left[a_i,b_i\right]$ is embedded in the $i$th summand of the
direct sum $\mathbb{R}^d=\bigoplus_{i=1}^d\mathbb{R}$.
Call such a box {\it pure $d$-dimensional} if $a_i<b_i$ for all $i$.  
An {\it orthogonal polytope} is a subset of $\mathbb{R}^d$ that has an expression as 
the union of a finite set of axis-aligned boxes.
Call an orthogonal polytope {\it pure $d$-dimensional} if it has an expression as a union of pure
$d$-dimensional axis-aligned boxes and {\it singular} if it admits no such expression.

Axis-aligned boxes are the fundamental examples of orthogonal polytopes.
(In fact, the term ``orthotope'' has been used for these objects, cf.\ \cite{coxeter_1963}.)
The {\it standard unit cube} is the cartesian
product $I^d$, where $I=\left[0,1\right]$ is the unit interval.
An orthogonal polytope $P$ is {\it integral} if it is a union of translates
$$P=\bigcup_{v\in S} (v+f_v)\hbox{ (Minkowski sum)},$$
where $S$ is a finite subset of the lattice $\mathbb{Z}^d\subset\mathbb{R}^d$
and $f_v$ is a face of the standard unit cube $I^d$ for all $v\in S$.
Call an orthogonal polytope $P$ {\it rational} if there is a positive integer $n$ such that
$nP$ is integral.

\subsection{Tangent cones and faces}

We aim here to define the face poset of an orthogonal polytope.

Given $s=(s_1,s_2,...,s_d)\in\{\pm 1\}^d$, the {\it orthant represented by $s$} is
$\Omega_s=\{(x_1,x_2,...,x_d):s_ix_i\geq 0\hbox{ for all }i\}.$
A {\it local orthotopal arrangement in $\mathbb{R}^d$} is a union of orthants.
Local orthotopal arrangements appear in bijective correspondence with Boolean functions.
Define the {\it sign} of $s$ as $(-1)^s=\prod_{i=1}^d s_i$.

Suppose $P\subset\mathbb{R}$ is a pure $d$-dimensional orthotope and $v\in P$.
The {\it tangent cone at $v$} is the local orthotopal arrangement $\alpha$ such that
there exists $\delta>0$ such that
$$P\cap [-\delta,\delta]^d=v+\left(\alpha\cap [-\delta,\delta]^d\right).$$

If $v\in P$, define the {\it genericity region} of $v$ as the set of all points $w$ that can
be joined by a path $\gamma$ for which the tangent cone at every point along
$\gamma$ is congruent to the tangent cone at $v$.  
Evidently every genericity region is path-connected and the genericity
regions partition $P$.  The {\it degree} of a genericity region is the smallest dimension
among the hyperplanes which contain it.
Thus, if $v$ lies interior to $P$ if $v$ has genericity degree $d$, and $v$ is a {\it singular point}
of $P$ if its degree of genericity is zero.
Define a {\it $k$-dimensional face} of $P$ as the closure of a $k$-dimensional genericity
region.
The {\it face poset} of $P$ is the set of all faces of $P$, partially ordered by inclusion.

Suppose $\alpha\subset\mathbb{R}^d$ is a local orthotopal arrangement.  
Define $\mu_d(\alpha)$ as the number of orthants occupied by $\alpha$
and let $\tau_d(\alpha)$ denote the sum of the signs
of the orthants occupied by $\alpha$.  Apparently 
these functions satisfy inclusion-exclusion identity
$$f(\alpha)+f(\beta)=f(\alpha\cap\beta)+f(\alpha\cup\beta),$$
for $f\in\{\mu_d,\tau_d\}$ and local orthotopal arrangements $\alpha,\beta$.

\section{Generic Orthotopes: Local Theory}

The purpose here is to introduce and study floral arrangements,
which will be needed in order to define generic orthotopes.  
First we explain how to use series-parallel diagrams to define floral
arrangements and floral vertices.
Then we describe some structure of the face lattice of a floral arrangement.
Finally we introduce some ``local'' valuations and relate them to the functions
$\mu_d$ and $\tau_d$ defined above.

\subsection{Series-parallel diagrams}

Define a {\it series-parallel diagram} (SPD for short) inductively as either

\begin{enumerate}

\item
A single edge $\arx$ joining two terminals (the vertices),

\item
a series connection of series-parallel diagrams, or

\item
a parallel connection of series-parallel diagrams.

\end{enumerate}

We admit the single vertex $\ar$ as an ``honorary'' SPD, even though it does not
possess two distinct terminals.

\subsubsection{Bouquet sign}

If $\Delta$ is an SPD, let $E(\Delta)$ denote the edges of $\Delta$, and let
$e(\Delta)=|E(\Delta)|$ and $v(\Delta)$ denote the numbers of edges and vertices of $\Delta$ respectively.
Use the symbols $\wedge$ and $\vee$ to denote series and parallel connection, respectively.
Then we have
$$v(\arx)=2,\ e(\arx)=1,$$
$$e(\Delta_1\vee\Delta_2)=e(\Delta_1\wedge\Delta_2)=e(\Delta_1)+e(\Delta_2),$$
$$v(\Delta_1\wedge\Delta_2)=v(\Delta_1)+v(\Delta_2)-1,$$
and
$$v(\Delta_1\vee\Delta_2)=v(\Delta_1)+v(\Delta_2)-2$$
for any SPD's $\Delta_1$ and $\Delta_2$.

Define the {\it bouquet rank} of an SPD $\Delta$ as
$$\rho(\Delta):=e(\Delta)-v(\Delta)+1.$$
The terminology comes from the fact that an SPD $\Delta$ is homotopy equivalent to a bouquet
of $\rho(\Delta)$ circles.
Evidently we have
$$\rho(\Delta_1\wedge\Delta_2)=\rho(\Delta_1)+\rho(\Delta_2)\hbox{ and }
\rho(\Delta_1\vee\Delta_2)=\rho(\Delta_1)+\rho(\Delta_2)+1$$
for any $\Delta_1$, $\Delta_2$.
Define the {\it bouquet sign} of $\Delta$ by
$$\sigma(\Delta):=(-1)^{\rho(\Delta)}.$$
Evidently we have
$$\sigma(\Delta_1\wedge\Delta_2)=\sigma(\Delta_1)\sigma(\Delta_2)
\hbox{ and }
\sigma(\Delta_1\vee\Delta_2)=-\sigma(\Delta_1)\sigma(\Delta_2).$$
for all $\Delta_1,\Delta_2$.

\subsubsection{Duality}

Suppose $\Delta$ is an SPD.  The {\it dual} of $\Delta$
is the SPD $\overline{\Delta}$ obtained by interchanging the roles of series
and parallel connection in its parse tree.
If $\Delta$ has $d$ edges, then the dual $\overline{\Delta}$ also has $d$ edges,
and the bouquet rank $\rho$ satisfies
$\rho(\Delta)+\rho(\overline{\Delta})=d-1$ for all $\Delta$.
Accordingly, the bouquet sign $\sigma$ satisfies
$\sigma(\Delta)\sigma(\overline{\Delta})=(-1)^{d-1}$ for every $\Delta$
with $d$ edges.

A {\it signed SPD} is a pair $(\Delta,s)$, where $\Delta$ is an SPD 
and $s:E(\Delta)\rightarrow\{\pm 1\}$.
In drawing signed SPD's by hand, it is convenient to
indicate negative edges with overline bars and positive edges without such marks
or by using distinguishing colors.
Define the {\it dual} of $(\Delta,s)$
as the signed SPD $(\overline{\Delta},-s)$. 
Notice one obtains the dual $(\overline{\Delta},-s)$ by
applying DeMorgan's laws when we interpret $(\Delta,s)$ as a Boolean function.
Figure \ref{dualdiagram} displays an example of a signed series-parallel 
diagram and its dual.

\begin{figure}
\centering
\def\svgwidth{0.5\textwidth}
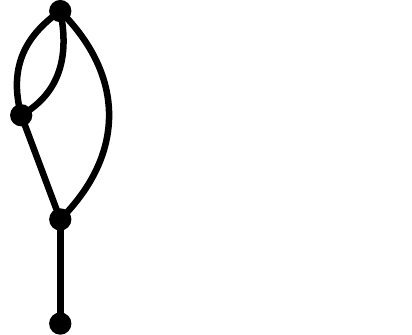
\caption{A signed series-parallel diagram and its dual.}
\label{dualdiagram}
\end{figure}

\subsection{Floral arrangements and floral vertices}

Suppose $(\Delta,s)$ is a signed SPD with edges $E\subset\left[d\right]$, and let
$\{H_i:i\in E\}$ be the positive closed half-spaces in $\mathbb{R}^d$ such
that $H_i$ is supported by the hyperplane $\Pi_{i,0}$ for each $i$.
The {\it floral arrangement} determined by $(\Delta,s)$ is the local orthotopal
arrangement obtained by interpreting series
connection as intersection, parallel connection as union,  
and each $i\in E(\Delta)$ is evaluated as $s_iH_i$.
We use the term {\it floral vertex} in the case when $E=\left[d\right]$. 
Figure \ref{flowers2d} illustrates this for $d=2$.
It is apparent that every floral arrangement $\alpha=\alpha(\Delta,s)$ has an expression
$\alpha\cong\mathbb{R}^{d-|E|}\times\alpha'$,
where $\alpha'\subset\mathbb{R}^{|E|}$ is a floral vertex on $|E|$ half-spaces.
If $\alpha$ is a floral arrangment defined by the signed SPD 
$(\Delta,s)$, let $\overline{\alpha}$ denote the complementary arrangement
defined by the dual $(\overline{\Delta},-s)$.

At this point, it is important to note a particular usage of the symbols
$\wedge$, $\vee$, $\cap$, and $\cup$.
If $\alpha,\beta\subset\mathbb{R}^d$ are floral arrangements, then 
$\alpha\cap\beta,\alpha\cup\beta\subset\mathbb{R}^d$ are interpreted using ordinary 
intersection and union.  However, we cannot expect $\alpha\cap\beta$
or $\alpha\cup\beta$ to be a floral arrangement in general.
For example, if $\alpha$ is represented by $(1\vee 2)\wedge 3$
and $\beta$ is represented by $(1\vee 3)\wedge 2$, then 
$\alpha\cup\beta$ is the degenerate arrangement with 6 edges as depicted
in Figure \ref{false3dvertices}.
On the other hand, if $\alpha$ and $\beta$
are floral vertices, then we use $\alpha\wedge\beta$ or $\alpha\vee\beta$
to denote the floral vertex obtained by joining the SPD's for $\alpha$
and $\beta$ in series or parallel, respectively.  In particular, if $\alpha$
and $\beta$ are floral vertices, then $\alpha\wedge\beta$ is a cartesian product
of $\alpha$ and $\beta$.  This is evident, for example, in Figure \ref{flowers2d}.

\begin{figure}
\centering
\def\svgwidth{0.75\textwidth}
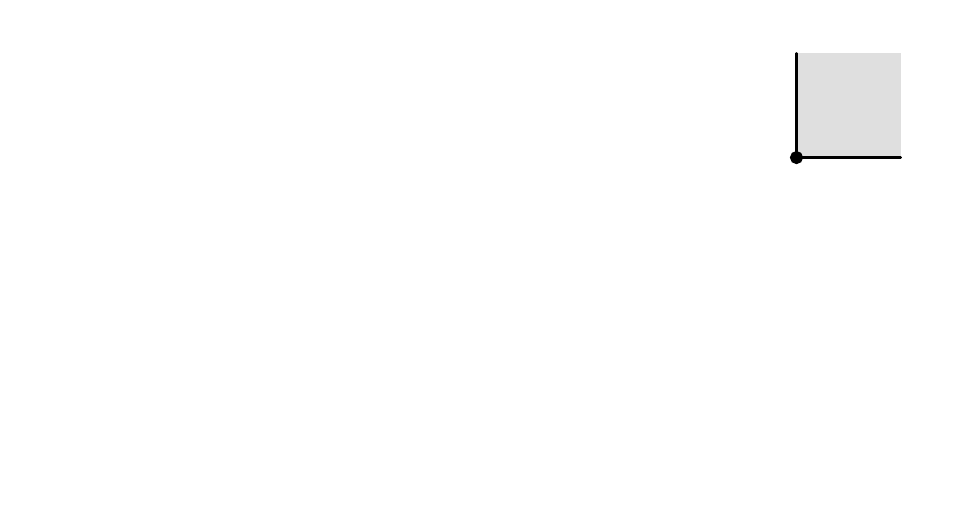
\caption{Floral vertices in $\mathbb{R}^2$.}
\label{flowers2d}
\end{figure}

We consider congruence types of floral arrangements.
The group of symmetries of the cube $[-1,1]^d$ acts on 
acts on floral arrangements $\alpha\subset\mathbb{R}^d$ in an apparent way.
Denote this group by $BC_d$, and recall that
$BC_d$ is isomorphic to the wreath product $S_2\wr S_d=S_d\ltimes S_2^d$, where
$S_d$ acts on $S_2^d$ by permuting the coordinates and $S_2^d$
acts by altering the signs of the coordinates.  Every element
$g\in BC_d$ may thus be regarded as an ordered pair $g=(f,s)$, where $s\in\{\pm 1\}^d$
is a tuple of signs and $f$ is a permutation of $\left[d\right]$.  Suppose $g=(f,s)\in BC_d$
and $(\Delta,s')$ is a signed SPD with edge set $I\subset\left[d\right]$.  Then the action
of $(f,s)$ on $(\Delta,s')$ is the signed SPD with edge set $f(I)$ and sign function $ss'$ obtained
by coordinate-wise multiplication.  
We may represent each orbit in this action by an SPD where we do not distinguish the edges.
We refer to the {\it dimension} of a floral vertex $\alpha$ as the number $\dim\alpha=e(\Delta)$ of
edges used in the diagram $\Delta$ which defines $\alpha$.

Let $A_d$ denote the number of congruence classes of floral vertices on $d$ edges.
This coincides with the number of unmarked SPD's and appears as sequence A000084 in the Online
Encyclopedia of Integer Sequences.  Thus, the number of congruence classes of floral arrangements
in $\mathbb{R}^d$
is $\sum_{k=0}^d A_k$.  Several values of these sequences appear in the table
in Figure \ref{oeis}.

\begin{figure}
$$\begin{array}{ccc}
d & A_d & \sum_{k=0}^d A_k \\
\hline
0 & 1 & 1 \\
1 & 1 & 2 \\
2 & 2 & 4 \\
3 & 4 & 8 \\
4 & 10 & 18 \\
5 & 24 & 42 \\
6 & 66 & 108 \\
7 & 180 & 288 \\
8 & 522 & 810 \\
9 & 1532 & 2342 \\
10 & 4624 & 6966 \\
\end{array}$$
\caption{The numbers of floral vertices and floral arrangements.}
\label{oeis}
\end{figure}

\subsection{Facets of a floral vertex}

We demonstrate here that the faces of a floral vertex coincide
with the faces of a simplex and that every face is also described by a floral vertex.
Throughout this section, we assume that  
$\alpha\subset\mathbb{R}^d$ is a floral vertex determined by a signed SPD 
$(\Delta,s)$ with edges $E=\left[d\right]$.  Without loss of generality, we
assume that every component of $s$ is $+1$, so that all of the corresponding half-spaces are positive.
For each $i\in E$, denote 
$\delta_i(\alpha)=(\partial\alpha)\cap\Pi_{i,0}$.

\begin{Prp}
Suppose a floral vertex has an expression $\alpha=\alpha_1\wedge\alpha_2$,
where $\alpha_1$ is a floral vertex on $\{H_1,H_2,...,H_k\}$.
For each $i\in\{1,2,3,...,k\}$, we have 
$\delta_i(\alpha)=\delta_i(\alpha_1)\wedge\alpha_2$.
\end{Prp}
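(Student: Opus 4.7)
The plan exploits the cartesian product structure forced by series connection. Since $\alpha_1$ uses only the half-spaces $\{H_1,\ldots,H_k\}$ and $\alpha_2$ uses the disjoint set $\{H_{k+1},\ldots,H_d\}$, the text already notes that $\alpha=\alpha_1\wedge\alpha_2$ is literally a cartesian product; equivalently, $\alpha=\alpha_1\cap\alpha_2$ when each factor is viewed cylindrically in $\mathbb{R}^d$, and $\alpha_2$ is invariant under translation along $e_i$ for every $i\leq k$. This invariance is the single fact that drives the proof.

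First I would recast everything in terms of the orthant decomposition. A local orthotopal arrangement is a union of closed orthants, and an orthant indexed by $s\in\{\pm 1\}^d$ lies in $\alpha$ iff $(s_1,\ldots,s_k)$ is an orthant of $\alpha_1$ and $(s_{k+1},\ldots,s_d)$ is an orthant of $\alpha_2$. A codimension-$1$ open face of the arrangement sitting in $\Pi_{i,0}$ is indexed by the remaining signs $(t_j)_{j\neq i}$, and it belongs to the facet $\delta_i(\alpha)$ precisely when exactly one of the two orthants obtained by setting $s_i=\pm 1$ belongs to $\alpha$.

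Next I would decouple this separation condition. Since $\alpha_2$ ignores the $i$th coordinate for $i\leq k$, flipping $s_i$ preserves the $\alpha_2$-factor of the orthant; hence the ``exactly one'' condition is equivalent to the conjunction of (a) $(t_{k+1},\ldots,t_d)$ being an orthant of $\alpha_2$, and (b) exactly one of the two adjacent orthants of $\mathbb{R}^k$ (obtained by flipping $s_i$) being an orthant of $\alpha_1$. Condition (b) is exactly the defining condition for the $(k{-}1)$-piece indexed by $(t_1,\ldots,\widehat{t_i},\ldots,t_k)$ to lie in $\delta_i(\alpha_1)$.

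Finally, conditions (a) and (b) together are precisely the defining conditions for the series connection $\delta_i(\alpha_1)\wedge\alpha_2$, whose factors have the disjoint edge sets $\{1,\ldots,k\}\setminus\{i\}$ and $\{k+1,\ldots,d\}$ and whose $\wedge$ is therefore the cartesian product inside $\Pi_{i,0}$. This yields the claimed equality. The main delicacy is interpretational rather than computational: the formula $\delta_i(\alpha)=(\partial\alpha)\cap\Pi_{i,0}$ must be read as the pure $(d-1)$-dimensional facet of $\alpha$ normal to $e_i$, i.e.\ the union of codimension-$1$ orthant faces of $\Pi_{i,0}$ separating $\alpha$ from its complement, since the full topological boundary of $\alpha$ intersected with $\Pi_{i,0}$ can carry lower-dimensional strata inherited from $\partial\alpha_2\cap\Pi_{i,0}$ that are not present on the right-hand side. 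With this natural convention, the orthant-level argument above is immediate.
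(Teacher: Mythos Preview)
Your argument is correct and rests on the same idea as the paper's: the paper's entire proof is the single sentence ``This follows from the fact that the floral vertex $\alpha_1\wedge\alpha_2$ is the cartesian product of the floral vertices $\alpha_1$ and $\alpha_2$.'' You have simply unpacked that sentence at the level of orthants, and in doing so you correctly flagged a genuine subtlety the paper glosses over, namely that the literal set $(\partial\alpha)\cap\Pi_{i,0}$ can contain lower-dimensional strata coming from $\partial\alpha_2$ (for instance, with $\alpha_1=H_1\cup H_2$ and $\alpha_2=H_3$ one picks up an extra ray in $\Pi_{1,0}$), so $\delta_i$ must be read as the pure $(d{-}1)$-dimensional facet for the proposition to hold as stated.
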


\begin{proof}
This follows from the fact that the floral vertex $\alpha_1\wedge\alpha_2$ is the 
cartesian product of the floral vertices $\alpha_1$ and $\alpha_2$.
\end{proof}

We may now state:

\begin{Prp}
Let $\alpha\subset\mathbb{R}^d$ be a floral vertex, and suppose
$k$ is an integer with $0<k<d$.  Then 
(i) every degree-$k$ genericity region of $\alpha$ is a $k$-dimensional face of $\alpha$, and
(ii) every generalized coordinate hyperplane of dimension $k$ contains precisely one face of $\alpha$.
\end{Prp}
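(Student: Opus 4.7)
The plan is to prove both parts simultaneously by strong induction on $d = e(\Delta)$, where $(\Delta,s)$ is the signed SPD determining $\alpha$. When $d \leq 1$ the range $0 < k < d$ is empty, so the base case is vacuous. For $d \geq 2$, the SPD $\Delta$ admits a top-level decomposition as either $\Delta = \Delta_1 \wedge \Delta_2$ or $\Delta = \Delta_1 \vee \Delta_2$, with each $\Delta_j$ having edge set $E_j \subsetneq [d]$, and I would split into cases accordingly.

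\textbf{Series case}, $\alpha = \alpha_1 \wedge \alpha_2$: Since $\alpha_1 \wedge \alpha_2$ is the cartesian product $\alpha_1 \times \alpha_2$ (as remarked following Figure \ref{flowers2d}), every $I \subset [d]$ splits uniquely as $I = I_1 \sqcup I_2$ with $I_j = I \cap E_j$, and $\Pi_{I,0}$ factors compatibly as $\Pi_{I_1,0} \times \Pi_{I_2,0}$. Setting $k_j = |E_j| - |I_j|$ gives $k_1 + k_2 = k$. The inductive hypothesis yields a unique face $F_j \subset \Pi_{I_j,0}$ of dimension $k_j$ whenever $0 < k_j < |E_j|$; the corner cases $k_j = 0$ and $k_j = |E_j|$ respectively yield the origin of $\mathbb{R}^{E_j}$ and the full vertex $\alpha_j$, both uniquely. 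Then $F_1 \times F_2$ is the unique $k$-dimensional face of $\alpha$ contained in $\Pi_{I,0}$, establishing (ii). For (i), any degree-$k$ genericity region of $\alpha$ factors as a product of genericity regions of the two factors because tangent cones multiply, and additivity of dimension under products combined with the inductive hypothesis delivers the claim.

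\textbf{Parallel case}, $\alpha = \alpha_1 \vee \alpha_2$: I would reduce to the series case via duality. The complementary floral vertex $\overline{\alpha}$ is defined by the dual SPD $\overline{\Delta_1} \wedge \overline{\Delta_2}$, hence is a series combination on the same $d$ edges. The sets $\alpha$ and $\overline{\alpha}$ share the same topological boundary in $\mathbb{R}^d$, and at each boundary point the tangent cone of $\overline{\alpha}$ is the orthant-complement of the tangent cone of $\alpha$. Since complementation of local orthotopal arrangements commutes with the $BC_d$-action, two boundary points have $BC_d$-congruent tangent cones in $\alpha$ if and only if they do in $\overline{\alpha}$. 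Moreover, any path witnessing that two points lie in the same positive-codimensional genericity region must avoid the interiors of $\alpha$ and of $\overline{\alpha}$ (otherwise the tangent cone along the path would jump to all of $\mathbb{R}^d$), so the admissible paths for the two arrangements are identical. It follows that the positive-codimensional genericity regions, and hence the $k$-dimensional faces for $0 < k < d$, of $\alpha$ and $\overline{\alpha}$ are literally the same subsets of $\mathbb{R}^d$, transferring both conclusions from $\overline{\alpha}$ to $\alpha$.

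\textbf{Main obstacle}: The series case is largely a bookkeeping exercise once the product structure is invoked. The harder step is the parallel case, which hinges on the assertion that the positive-codimensional genericity regions of $\alpha$ and $\overline{\alpha}$ coincide as sets. Justifying this requires both that orthant-complementation preserves $BC_d$-congruence of tangent cones at a single point, and that the path-connectivity condition in the definition of genericity region yields the same admissible paths for $\alpha$ as for $\overline{\alpha}$ — a point where one must be careful to rule out detours through the interior of either set.
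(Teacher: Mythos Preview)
Your proposal is correct and follows essentially the same strategy as the paper: induction on $d$, handling the series case via the cartesian-product structure and reducing the parallel case to the series case by passing to the complementary arrangement $\overline{\alpha}$. Your write-up is considerably more detailed than the paper's terse sketch---in particular, your careful justification that the positive-codimensional genericity regions of $\alpha$ and $\overline{\alpha}$ coincide (via matching boundaries, complementary tangent cones, and boundary-constrained paths) makes explicit exactly the step the paper glosses over with ``one may apply this argument to the complementary arrangement.''
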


\begin{proof}
This follows by induction.  The statement is apparently valid when $d=1$.
Suppose $d\geq 2$ is fixed and assume the statement is true for all values less than $d$,
and let $\alpha$ be a floral vertex on $d$ half-spaces.
Suppose first that $\alpha$ is a conjunction, say $\alpha=\alpha_1\wedge\alpha_2$.
The induction hypothesis then holds for
$\alpha_1$ and $\alpha_2$.  However, since $\alpha$ is then a cartesian product,
so the conclusion holds.  On the other hand, if $\alpha$ is a disjunction $\alpha_1\vee\alpha_2$, 
then one may
apply this argument to the complementary arrangement $\overline{\alpha}$, which is necessarily
a conjunction.
\end{proof}

As an immediate corollary of the preceding results on the structure of a local
floral arrangement, we see:

\begin{Prp}
Let $\alpha\subset\mathbb{R}^d$ be a floral vertex on the half-spaces
$H_1,H_2,...,H_d$.  Then each facet $\delta_i(\alpha)$ is a floral
vertex on the subspaces $H_j\cap\Pi_{i,0}$, with $j\neq i$.
\end{Prp}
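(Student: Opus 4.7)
The plan is induction on $d$, paralleling the structural induction used in the proof of the preceding proposition and leveraging the decomposition $\delta_i(\alpha_1\wedge\alpha_2)=\delta_i(\alpha_1)\wedge\alpha_2$ that has just been established. The base case $d=1$ is immediate: $\alpha=H_1$ and $\delta_1(\alpha)=\Pi_{1,0}\cong\mathbb{R}^0$ is the honorary floral vertex on the empty edge set represented by $\ar$.

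For the inductive step I would split according to the top-level connection in the signed SPD of $\alpha$. If $\alpha=\alpha_1\wedge\alpha_2$ and (after relabeling if necessary) the index $i$ lies among the edges of $\alpha_1$, then the first proposition of the subsection gives $\delta_i(\alpha)=\delta_i(\alpha_1)\wedge\alpha_2$. By the induction hypothesis, $\delta_i(\alpha_1)$ is a floral vertex on $\{H_j\cap\Pi_{i,0}:j\in\mathrm{edges}(\alpha_1),\ j\neq i\}$ within the appropriate coordinate subspace of $\Pi_{i,0}$. Since $\alpha_2$ does not involve coordinate $i$, it is naturally identified with a floral vertex on $\{H_j\cap\Pi_{i,0}:j\in\mathrm{edges}(\alpha_2)\}$ in the complementary coordinate subspace of $\Pi_{i,0}$. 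Joining these in series exhibits $\delta_i(\alpha)$ as the required floral vertex on $\{H_j\cap\Pi_{i,0}:j\neq i\}$.

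If $\alpha=\alpha_1\vee\alpha_2$ I would apply the duality trick used in the preceding proposition. By DeMorgan's laws, $\overline{\alpha}=\overline{\alpha_1}\wedge\overline{\alpha_2}$ is a conjunction, and the set-theoretic identity $\partial\alpha=\partial\overline{\alpha}$ yields $\delta_i(\alpha)=\delta_i(\overline{\alpha})$ as subsets of $\Pi_{i,0}$. The conjunction case applied to $\overline{\alpha}$ realizes this common set as a floral vertex on $\{\overline{H_j}\cap\Pi_{i,0}:j\neq i\}$. Because $\overline{H_j}\cap\Pi_{i,0}$ and $H_j\cap\Pi_{i,0}$ share the supporting hyperplane $\Pi_{i,0}\cap\Pi_{j,0}$, the same signed SPD with appropriate sign-flips on the affected edges describes $\delta_i(\alpha)$ as a floral vertex on $\{H_j\cap\Pi_{i,0}:j\neq i\}$.

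The main subtlety, rather than a serious obstacle, is the bookkeeping in the disjunctive case: one has to confirm that the topological boundary is insensitive to which Boolean representative of the arrangement is used, and that rewriting the floral vertex over $\{H_j\cap\Pi_{i,0}\}$ instead of $\{\overline{H_j}\cap\Pi_{i,0}\}$ is harmless because the signed-SPD framework permits either orientation on each edge. Once these identifications are in place, the result is an immediate combination of the two preceding propositions.
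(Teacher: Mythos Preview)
Your proof is correct and follows essentially the same approach as the paper. The paper states this proposition as an immediate corollary of the two preceding results (the facet formula $\delta_i(\alpha_1\wedge\alpha_2)=\delta_i(\alpha_1)\wedge\alpha_2$ and the simplex-structure proposition proved by the same conjunction/disjunction induction with the duality trick), and then elucidates it by describing the facet-computing algorithm; your argument simply unpacks that corollary using the identical inductive scaffold.
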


To elucidate this, we combine the preceding results about the
structure of a floral arrangement to describe an
algorithm for computing $\delta_i(\alpha)$.
The input of the algorithm includes a positive integer
$d$, a floral vertex $\alpha$ described by an SPD $\Delta$,
and an integer $i\in\left[d\right]$.  In the first iteration, one checks whether or
not $\Delta$ is a conjunction or a disjunction.  
If $\Delta$ is a conjunction, then we let $\alpha_0=\alpha$.  
If $\Delta$ is a disjunction, then we let $\alpha_0=\overline{\alpha}$.  
Next, assuming that we know $\alpha_k$ from a previous iteration,
we define two new floral arrangements
$\beta_k$ and $\gamma_k$ by the requirement that $\alpha_k=\beta_k\wedge\gamma_k$, where
$\beta_k$ and $\gamma_k$ are floral arrangements such that
$\beta_k$ is a disjunction which contains $i$ or $\overline{i}$.  
Next, we define $\alpha_{k+1}=\overline{\beta_{k}}$.
We iterate this until $\alpha_k$ is a floral arrangement on a single
half-space.  The floral arrangement representing the facet $\delta_i(\alpha)$ is then the
conjunction $\gamma_1\wedge\gamma_2\wedge\cdots\wedge\gamma_k$, where $\gamma_k$
is the last ``difference'' obtained from these iterations.

Figure \ref{facetalgorithm} illustrates an example of this algorithm.
The red numerals indicate negative half-spaces, and the red arrows indicate
complementation.  Here, the input arrangement is
$$\alpha=(((((\mathbf{1}\vee\mathbf{2})\wedge\mathbf{3})\vee\mathbf{4})
\wedge\mathbf{5})\vee\mathbf{6})\wedge(\mathbf{7}\vee\mathbf{8}),$$
represented by the SPD in the figure,
and the output arrangement is
$$\delta_4(\alpha)=(\overline{\mathbf{1}}\vee\overline{\mathbf{2}})
\wedge\mathbf{5}\wedge\overline{\mathbf{6}}\wedge(\mathbf{7}\vee\mathbf{8}).$$
The reader is urged to use this algorithm to compute $\delta_i(\alpha)$ for $i\neq 4$.

\begin{figure} 
\centering 
\includegraphics[width=0.75\textwidth]{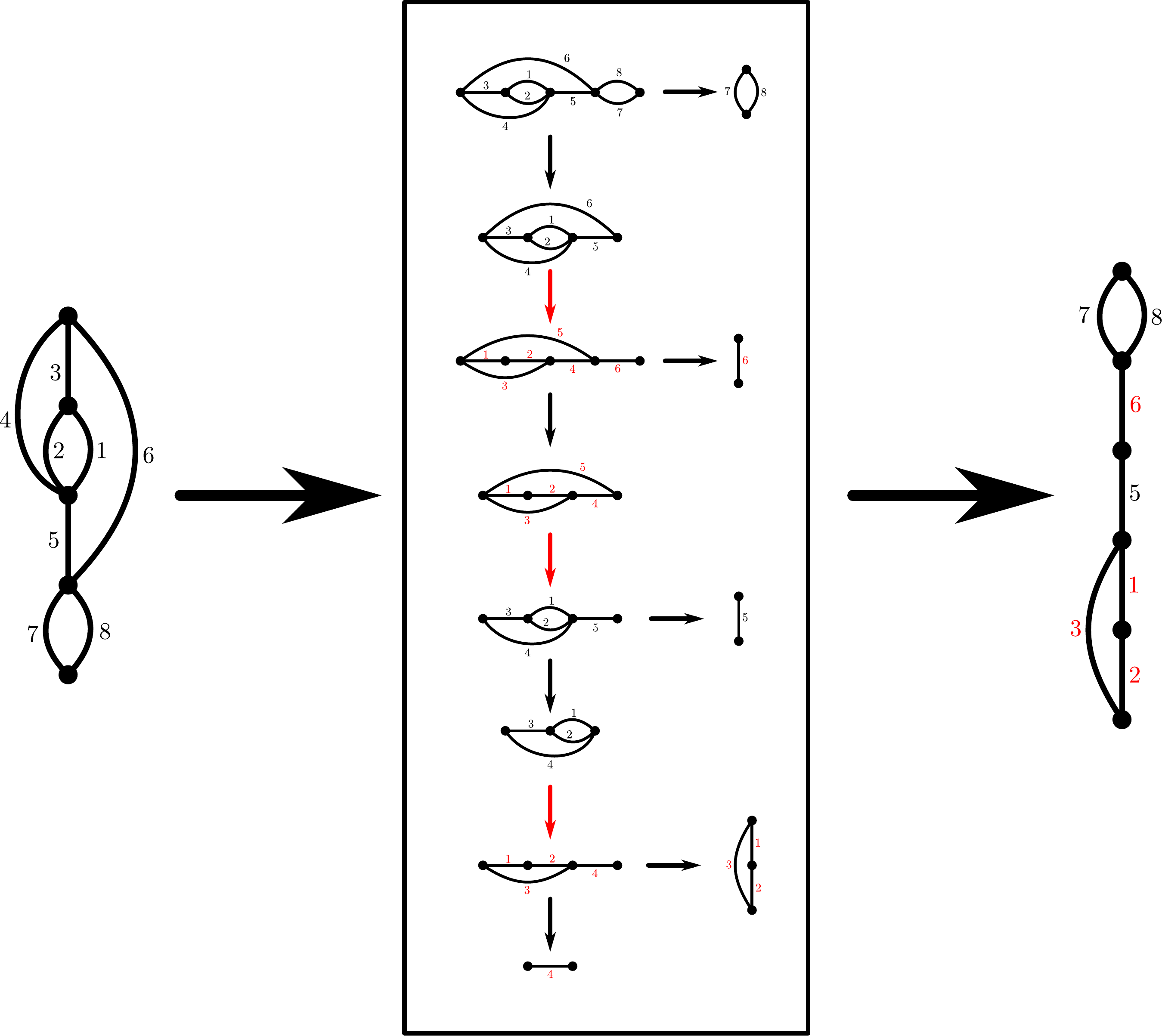}
\caption{Illustration of the facet algorithm, for $\delta_4(\alpha)$.}
\label{facetalgorithm}
\end{figure}

\subsection{Edges and cross-sections}

In this section we explain some structure concerning edges and cross-sections of a floral vertex.  
First we show how to determine the edges emanating from a floral vertex. 
Then we show that each cross-section perpendicular to a given edge is described
by a floral arrangement.

\subsubsection{Edges of a floral vertex}

Let $\alpha$ be a floral vertex.
It is a consequence of the analysis above that for each $i\in\left[d\right]$, 
exactly one of the cones generated by either $e_i$ or $-e_i$ is an edge of $\alpha$.
We demonstrate here how to determine the directions of these edges.

A precise statement of this result is
facilitated with a method of deleting an edge from an SPD, as follows.
Suppose $\Delta$ is an SPD with edges $E=\left[d\right]$.
Choose and fix an edge $i\in E$.  Call $i$ {\it disjunctive} if there is another
subdiagram that is connected in parallel with $i$.
Call $i$ {\it conjunctive} if it is not disjunctive. 
By DeMorgan's laws, an edge $i$ of $\Delta$ is is conjunctive in $\Delta$
if and only if $i$ is disjunctive in the dual $\overline{\Delta}$.

If $i$ is an edge of $\Delta$, define the 
{\it deletion of $i$} as the SPD $\Delta\backslash i$ as follows:

\begin{itemize}

\item
If $i$ is conjunctive, then $\Delta\backslash i$ is the SPD
obtained by deleting $i$ from $\Delta$ and identifying the terminals of $i$.

\item
If $i$ is disjunctive, then $\Delta\backslash i$ is the series parallel diagram
obtained by deleting $i$ from $\Delta$.

\end{itemize}

\begin{Prp}
Suppose $\alpha$ is a floral vertex determined by the signed series-parallel
diagram $(\Delta,s)$, with $s=(s_1,s_2,...,s_d)$ and $i$ is an edge of $\Delta$.  Then
Then the cone generated by $s_ie_i$ is an edge of $\alpha$ if and only if 
$\sigma(\Delta)=\sigma(\Delta\backslash i)$.
\end{Prp}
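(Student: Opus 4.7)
My plan is to split the argument into two stages: first, to show that $R_{s_i e_i}$ (the cone generated by $s_i e_i$) is a 1-dimensional face of $\alpha$ iff the edge $i$ is conjunctive in $\Delta$; and second, to show that $i$ is conjunctive iff $\sigma(\Delta) = \sigma(\Delta \setminus i)$. The composition of these two equivalences is the proposition.

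For the first stage, I would fix a point $p = t s_i e_i$ with $t > 0$ small. Since $s_i x_i = t > 0$ at $p$, the half-space $s_i H_i$ contains $p$ in its interior, while every other $s_j H_j$, $j \neq i$, has $p$ on its bounding hyperplane. The local behavior of $\alpha$ near $p$ is therefore controlled by the Boolean function $f|_{x_i = 1}$ acting on small perturbations of the remaining $d-1$ coordinates. The cone $R_{s_i e_i}$ is a 1-dimensional face precisely when the genericity region through $p$ remains confined to the $x_i$-axis, which happens iff $f|_{x_i = 1}$ is essential in each of the other $d-1$ variables. If $i$ is conjunctive, then at the direct $\wedge$-parent of $i$ in the parse tree the substitution $x_i = 1$ reduces $1 \wedge g$ to $g$, simplifying the whole expression to the read-once function defined by $\Delta \setminus i$ (the graph contraction of $i$); every variable of $\Delta \setminus i$ appears exactly once and is therefore essential. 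If instead $i$ is disjunctive with sibling subdiagram $G$ at its direct $\vee$-parent, then $x_i = 1$ forces that $\vee$ to evaluate to $1$, absorbing $G$ entirely and eliminating the dependence of $f|_{x_i = 1}$ on the variables of $G$; the genericity region then extends off-axis in those directions.

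For the second stage, I would read off $\rho(\Delta \setminus i)$ directly from the deletion rules. Conjunctive deletion removes one edge and identifies its two terminals, decreasing both $e$ and $v$ by $1$, so $\rho = e - v + 1$ is unchanged and $\sigma(\Delta \setminus i) = \sigma(\Delta)$. Disjunctive deletion removes only the edge (since the terminals are shared with the parallel sibling and remain), decreasing $e$ by $1$ but leaving $v$ fixed, so $\rho$ drops by $1$ and $\sigma(\Delta \setminus i) = -\sigma(\Delta)$. Combining the two stages yields the proposition.

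The main obstacle is the first stage, specifically the correct interpretation of edgeness in this setting. A naive inclusion criterion (declaring $R_{s_i e_i}$ an edge iff it lies in $\alpha$ while the opposite ray does not) fails, because the opposite ray can lie in $\alpha$ through an independent parallel branch while $R_{s_i e_i}$ is still a genuine 1-face, distinguished only by its tangent cone. The analysis must therefore proceed via tangent cones and genericity regions, and the read-once property of $\Delta$ is precisely what produces the clean dichotomy between the conjunctive case (where $f|_{x_i=1}$ remains read-once and essential in every remaining variable) and the disjunctive case (where an entire parallel sibling is absorbed).
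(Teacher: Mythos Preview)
Your proof is correct but proceeds along a different axis than the paper's. The paper argues via its facet-computing algorithm: one repeatedly complements a subdiagram containing $i$ and splits off a conjunctive factor, terminating at a single edge marked $i$ or $\overline{i}$; the orientation of the $i$th edge is then read off from the parity of the number of complementations, and that parity is identified with the comparison $\sigma(\Delta)$ versus $\sigma(\Delta\backslash i)$. The auxiliary facts invoked are that complementation preserves the boundary complex and that cartesian products preserve edges.

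Your route bypasses the facet algorithm entirely. You localize at $p=ts_ie_i$, observe that the tangent cone there is governed by the restricted read-once function $f|_{x_i=\mathrm{true}}$, and note that this restriction keeps all $d-1$ remaining variables essential exactly when $i$'s direct parent in the parse tree is a $\wedge$-node (conjunctive case); otherwise the parallel sibling $G$ is absorbed and the genericity region spreads off the axis. Then you finish with a one-line Euler-count on $\Delta\backslash i$: contraction preserves $\rho$, simple deletion lowers it by one. This is a cleaner two-step equivalence (edge $\Leftrightarrow$ conjunctive $\Leftrightarrow$ $\sigma$ preserved) that does not rely on the earlier iterative machinery, at the cost of doing a small tangent-cone computation by hand. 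The paper's version, by contrast, reuses an algorithm it has already built, so it is shorter in context but less self-contained.
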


\begin{proof}
We use an observation about the facet-computing algorithm described above.
Each iteration in the algorithm involves complementation of a subdiagram of
$\Delta$ or $\overline{\Delta}$ which contains $i$ or $\overline{i}$.  Thus, in the last iteration, one
is left with the series-parallel graph consisting of a single edge that is marked
with either $i$ or $\overline{i}$.  Thus, the sign is 
determined by the parity of the number of iterations
required.  
Note that a floral vertex and its complementary arrangement have the same boundary complex,
so, in particular, they have the same edges.
Note also that cartesian products preserve edges in the following sense:
If $e_i$ generates an edge of a floral vertex, say $\alpha$, then $e_i$ also
generates an edge in a conjunction $\alpha\wedge\beta$.
\end{proof}

From this, we see that another way to write this result is that, under
the given hypotheses, the product
$\sigma(\Delta)\sigma(\Delta\backslash i)s_ie_i$ generates an edge for each $i$.

\subsubsection{Cross-sections}

Suppose $\alpha\subset\mathbb{R}^d$ is a floral arrangement.  
For our purposes, a {\it cross-section of $\alpha$} is an intersection of the form $\alpha\cap\Pi_{I,\lambda}$, 
where $\Pi_{I,\lambda}$ is a generalized hyperplane as defined earlier.
We show that every cross-section parallel to a generalized coordinate hyperplane
of a floral arrangement is described by another floral arrangement. 

Assume $\alpha\subset\mathbb{R}^d$ is a floral vertex determined
by the signed SPD $(\Delta,s)$.  Without loss of generality,
assume that every component of $s$ is $+1$.  Suppose $i\in E(\Delta)$ is given. 
The structure of a given cross-section $\alpha\cap\Pi_{i,\lambda}$ depends
on the sign of $\lambda$ and the orientation of the edge of $\alpha$ corresponding to $i$.
To distinguish the cases, define
an {\it edge cross-section} of $\alpha$ as a cross-section 
that passes through the relative interior of the edge of $\alpha$ corresponding to $i$, and
define a {\it residual cross-section} of $\alpha$ as a cross section $\alpha\cap(\Pi_{i,\lambda})$
such that $\alpha\cap(\Pi_{i,-\lambda})$ is an edge cross-section.
Following immediately from the definitions, we notice:

\begin{Prp}
Suppose $\alpha$ is a floral vertex represented by the series-parallel
diagram $\Delta$ and $i$ is an edge of $\Delta$.  
The edge cross-section of $\alpha$
corresponding to $i$ is congruent to the floral vertex represented by the series
parallel diagram $\Delta\backslash i$.
\end{Prp}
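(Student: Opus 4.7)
The plan is to interpret the floral vertex $\alpha$ as the value $B_\Delta(s_1H_1,\ldots,s_dH_d)$ of the read-once Boolean function attached to $\Delta$, and then to observe that slicing by $\Pi_{i,\lambda}$ freezes the truth value $T_i := [x\in s_iH_i]$ to a constant $c_i\in\{T,F\}$ determined by the sign of $\lambda$. The edge cross-section therefore reduces $B_\Delta$ to a Boolean function in the remaining variables, and the task is to show that this restricted function is exactly $B_{\Delta\backslash i}$ after identifying $\Pi_{i,\lambda}$ with $\mathbb{R}^{d-1}$ via $\pi_i$. Without loss of generality assume $s_j=+1$ for every $j$.

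The first step is to identify $c_i$. By the preceding proposition the edge of $\alpha$ corresponding to $i$ is the cardinal ray of $\sigma(\Delta)\sigma(\Delta\backslash i)s_ie_i$, so the edge cross-section sits at $\lambda>0$ (whence $c_i=T$) exactly when $\sigma(\Delta)\sigma(\Delta\backslash i)=+1$. Using the loop-count interpretation $\sigma=(-1)^\rho$ with $\rho=e-v+1$, a conjunctive deletion (remove $i$ and identify its endpoints) leaves $\rho$ unchanged while a disjunctive deletion (just remove $i$) decreases $\rho$ by one. Consequently $c_i=T$ iff $i$ is conjunctive in $\Delta$, and $c_i=F$ iff $i$ is disjunctive.

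Next comes the Boolean identity $B_\Delta|_{T_i=c_i}=B_{\Delta\backslash i}$. Let $P$ denote the node of the parse tree immediately above $i$ and let $\Xi$ denote the subdiagram formed by the remaining children of $P$. If $P$ is a series connection, then locally $B_\Delta$ has the form $T_i\wedge B_\Xi$ embedded in the rest of the parse tree, and substituting $T_i=T$ reduces this factor to $B_\Xi$, which matches the conjunctive deletion rule that contracts $i$ out of $P$; if $P$ is a parallel connection, the symmetric computation with $B_\Delta$ locally $T_i\vee B_\Xi$ and $T_i=F$ again produces $B_\Xi$, matching the disjunctive rule. The ancestors of $P$ do not involve $T_i$, so they are carried through the substitution unchanged, and the restricted function is $B_{\Delta\backslash i}$.

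Finally, identifying $\Pi_{i,\lambda}$ with $\mathbb{R}^{d-1}$ via $\pi_i$ sends each restricted half-space $s_jH_j\cap\Pi_{i,\lambda}$ (for $j\neq i$) to a half-space of $\mathbb{R}^{d-1}$ with the corresponding supporting hyperplane and sign, so the edge cross-section is the floral vertex represented by $\Delta\backslash i$. The main obstacle I anticipate is the first step: one must ensure that the sign $\sigma(\Delta)\sigma(\Delta\backslash i)$ supplied by the edge-direction proposition is precisely the one that triggers the Boolean substitution collapsing $P$ to $\Xi$. Once $\sigma$ is read topologically through loop counts, however, the edge direction and the two deletion rules line up automatically.
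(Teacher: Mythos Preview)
Your proof is correct and makes explicit exactly the reasoning the paper leaves implicit: the paper states this proposition with no proof beyond the remark ``Following immediately from the definitions,'' relying on the just-established edge-direction proposition and the definition of $\Delta\backslash i$. Your argument---identifying the sign of $\lambda$ via $\sigma(\Delta)\sigma(\Delta\backslash i)$, translating that into the conjunctive/disjunctive dichotomy through the loop-count calculation, and then performing the Boolean substitution $T_i\mapsto c_i$ at the parent node---is precisely the unpacking the paper expects the reader to perform.
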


Next we describe the residual cross-section of $\alpha$ for the edge $i$. 
Given an SPD $\Delta$, define $\Delta_i$ as follows:

\begin{itemize}

\item
If $i$ is conjunctive, then $\Delta_i$ is the maximal subdiagram of $\Delta$ connected in series
with $i$.

\item
If $i$ is disjunctive, then $\Delta_i$ is the maximal subdiagram of $\Delta$ connected in parallel
with $i$.

\end{itemize}

Notice if $i$ is disjunctive, then $\Delta_i$ is the parallel connection of all of the
subdiagrams that share terminals with $i$.  If $i$ is conjunctive, then
one may employ the same idea using duality.  Thus, if $i$ is conjunctive in $\Delta$, then
$i$ is disjunctive in the dual $\overline\Delta$, and one may define the dual of
$\Delta_i$ as the maximal subdiagram of $\overline{\Delta}$ connected in parallel
with $i$, then dualize to obtain $\Delta_i$.  We define the deletion $\Delta\backslash\Delta_i$
analogous to our definition of $\Delta\backslash i$ above.
Now we may state:

\begin{Prp}
Suppose $\alpha$ is a floral vertex determined by the series-parallel
diagram $\Delta$ and suppose $i$ is an edge of $\Delta$.
Then the residual cross-section of $\alpha$ corresponding to $i$ is congruent to
the floral arrangement determined by the series parallel diagram
$\Delta\backslash\Delta_i$.
\end{Prp}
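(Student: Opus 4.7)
The plan is to trace the effect, on the Boolean expression defining $\alpha$, of fixing $H_i$ to a constant value on the residual hyperplane, and then recognize the simplified formula as the one read off from $\Delta\setminus\Delta_i$. Assume without loss of generality that $s\equiv +1$, so that $H_i=\{x_i\geq 0\}$ for each $i$. The first step is to identify which side of $\Pi_{i,0}$ carries the residual cross-section. Using the rank identities $\rho(\Delta_1\wedge\Delta_2)=\rho(\Delta_1)+\rho(\Delta_2)$ and $\rho(\Delta_1\vee\Delta_2)=\rho(\Delta_1)+\rho(\Delta_2)+1$ together with the fact that conjunctive deletion removes one edge and one vertex while disjunctive deletion removes only one edge, one finds $\sigma(\Delta)\sigma(\Delta\setminus i)=+1$ exactly when $i$ is conjunctive in $\Delta$. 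The edge-direction proposition then tells us that the edge of $\alpha$ corresponding to $i$ points along $+e_i$ in the conjunctive case and along $-e_i$ in the disjunctive case. Thus the residual cross-section lies on the side where $H_i$ evaluates to ``false'' when $i$ is conjunctive and to ``true'' when $i$ is disjunctive.

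The next step is to push this substitution through the parse tree of $\Delta$. In the conjunctive case, $i$ sits inside the maximal series subblock $\Delta_i$; setting $H_i=\mathrm{false}$ forces the entire $\wedge$-block $\Delta_i$ to evaluate to ``false'', which is the identity for the $\vee$-connection that grafts $\Delta_i$ into its parent in $\Delta$. Hence $\Delta_i$ drops out of the formula, and what remains is precisely the Boolean expression encoded by $\Delta\setminus\Delta_i$, where the deletion rule (collapse and identify the terminals of $\Delta_i$) mirrors exactly the Boolean simplification. The disjunctive case is dual: setting $H_i=\mathrm{true}$ collapses the maximal parallel subblock $\Delta_i$ to ``true'', which is the identity for the surrounding $\wedge$-connection, and again $\Delta_i$ drops out, yielding $\Delta\setminus\Delta_i$. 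I would formalize this as an induction on the parse tree of $\Delta$, with the base case being a single edge and the inductive step splitting $\Delta$ at its root into $\Delta_1\wedge\Delta_2$ or $\Delta_1\vee\Delta_2$, then applying the hypothesis to the subdiagram containing $i$ and observing that the relevant cross-section either passes to a cartesian factor (series) or to a union (parallel).

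The main obstacle is the degenerate case $\Delta_i=\Delta$, in which there is no parent connection to absorb the collapsed subblock. The residual is then forced to be the absorbing constant itself: the empty set when $i$ is conjunctive and the entire cross-sectional hyperplane when $i$ is disjunctive. This is the point where one must adopt a context-sensitive reading of the honorary empty diagram $\ar$ as the identity element for whichever connection was severed. I expect this to be a matter of bookkeeping rather than a real difficulty, handled directly in the base of the induction where both $\alpha$ and $\Delta\setminus\Delta_i$ are simple enough to compute by hand.
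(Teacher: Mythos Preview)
Your argument is essentially the same as the paper's: both identify the residual side via the edge-direction proposition, then model the cross-section by intersecting $\alpha$ with the appropriate half-space (the paper phrases this as ``conjoining $\Delta$ with $\overline{i}$ or $i$''), and finally simplify the Boolean expression using the absorption identities $(\alpha\cap\beta)\cap\overline{\alpha}=\varnothing$ and $(\alpha\cup\beta)\cap\alpha=\alpha$, with the simplification carried out by induction on the parse tree. One small slip worth flagging: in the paper's conventions $\Delta_i$ is the maximal subdiagram connected in series (resp.\ parallel) \emph{with} $i$, so $i\notin\Delta_i$; your sentence ``$i$ sits inside the maximal series subblock $\Delta_i$'' conflates $\Delta_i$ with the block $i\wedge\Delta_i$, though your subsequent reasoning is consistent once this is untangled.
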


\begin{proof}
By our result above on the orientation of the edge of $\alpha$ corresponding to
$i$, one may obtain the diagram of a residual cross-section 
by conjoining the diagram $\Delta$ with either $\overline{i}$ or $i$ depending
respectively on whether $i$ is conjunctive or disjunctive.  
If $i$ is conjunctive, then conjoining $\overline{i}$ with $\Delta$ kills
all subdiagrams that are connected in series with $i$.
This is a consequence of the set-theoretic fact that
$(\alpha\cap\beta)\cap\overline{\alpha}$ is an empty set for any $\alpha,\beta$.
If $i$ is disjunctive, then conjoining $i$ with $\Delta$ kills all
subdiagrams that are connected in parallel with $i$.
This is a consequence of the set-theoretic fact that
$(\alpha\cup\beta)\cap\alpha=\alpha\cap\beta$ for any $\alpha,\beta$.
The result then follows by induction.
\end{proof}

Figures \ref{conjunctivesection} and \ref{disjunctivesection}
illustrate examples of cross-sections of the four types.  
In both figures, one is given a floral vertex $\alpha\subset\mathbb{R}^8$.
Figure \ref{conjunctivesection} displays the resulting cross-sections for the conjunctive
edge numbered 6, and Figure \ref{disjunctivesection} displays the resulting cross-sections
for the disjunctive edge numbered 1.  In both figures, the edge cross-sections appear on the left
while the residual cross-sections appear on the right.  The reader
is urged to compute edge cross-sections and residual cross-sections for the other
edges.

\begin{figure} 
\centering 
\includegraphics[width=0.75\textwidth]{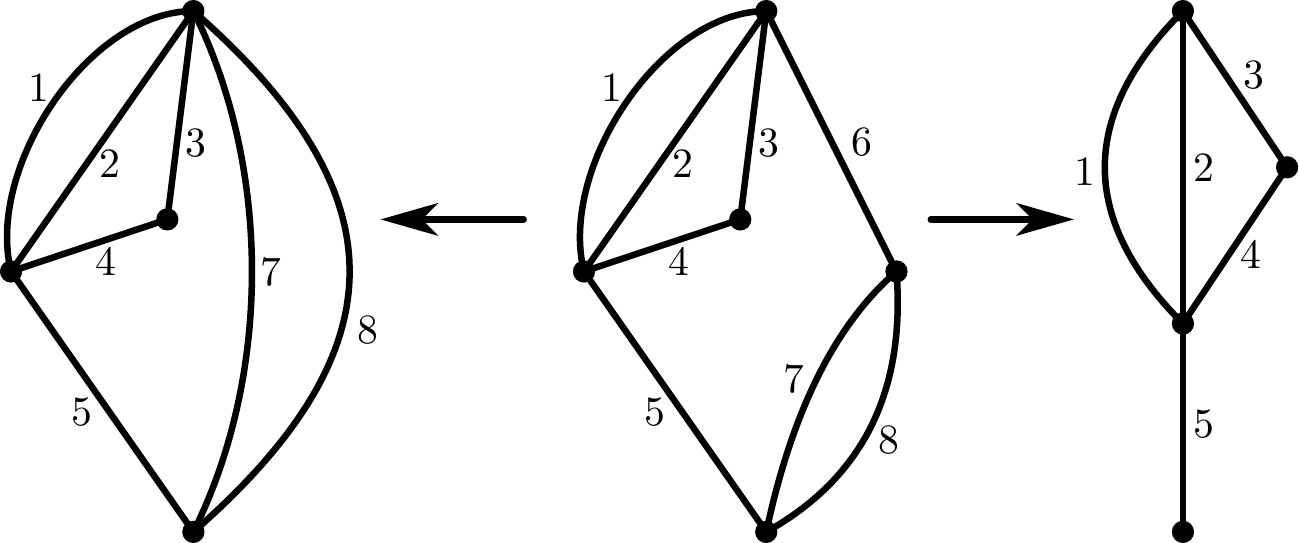}
\caption{Cross-sections for a conjunctive edge.}
\label{conjunctivesection}
\end{figure}

\begin{figure} 
\centering 
\includegraphics[width=0.75\textwidth]{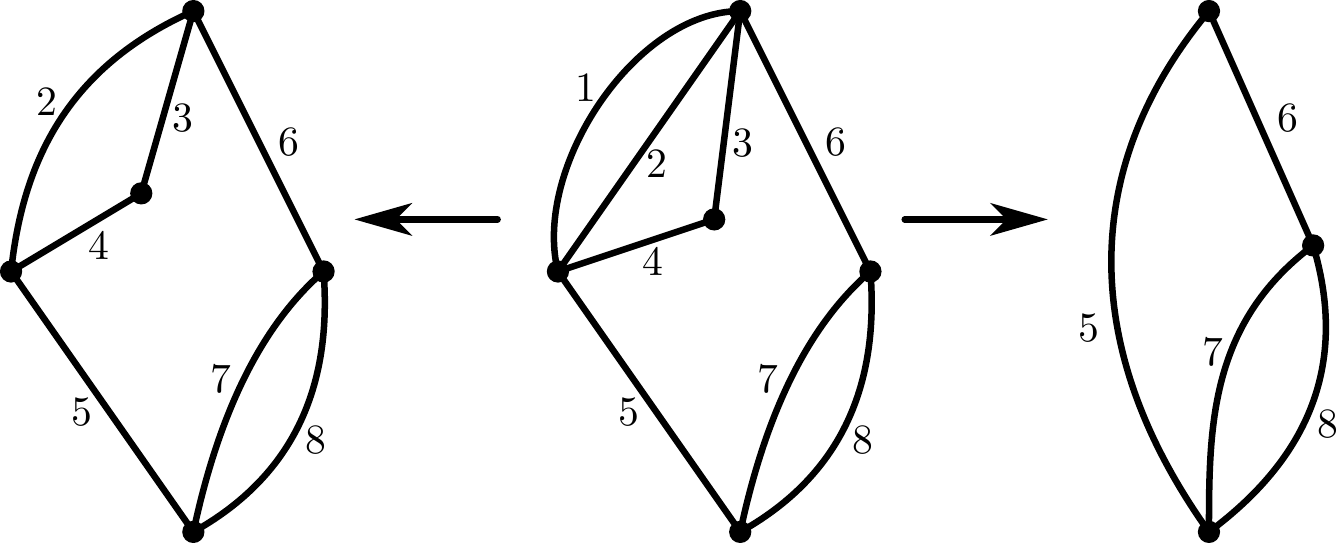}
\caption{Cross-sections for a disjunctive edge.}
\label{disjunctivesection}
\end{figure}

Finally we describe the cross-sections of the form $\alpha\cap\Pi_{i,0}$.
Since we defined a local arrangement as being a union of {\it closed} orthants, 
each such cross-section is congruent
to exactly one of either an edge cross-section or a residual cross-section of $\alpha$.
It can't be both because we assumed that $\alpha$ is a floral vertex.
Thus, in either case, we can describe every cross-section
by a floral arrangement.

\subsection{Local valuations}

In this section we introduce and develop functions $\mu$ and $\tau$ and relate these
to the functions $\mu_d$ and $\tau_d$ introduced above.  These
functions will facilitate the
study of global properties of generic orthotopes below.

\subsubsection{Volume}

First we define $\mu$ on SPDs as follows.  Suppose $\Delta$ is a
SPD.  We define $\mu(\Delta)$ inductively as follows.
\begin{enumerate} 

\item
If $\Delta$ is the SPD with exactly one edge, then $\mu(\Delta)=1$.

\item
$\mu(\Delta_1\wedge\Delta_2)=\mu(\Delta_1)\mu(\Delta_2)$ for all $\Delta_1$, $\Delta_2$.
 
\item
$\mu(\overline{\Delta})+\mu(\Delta)=2^d$, where $\Delta$ has $d$ edges.

\end{enumerate}

This function $\mu$ is evidently related to the functions $\mu_d$:

\begin{Prp}
Suppose $\alpha\subset\mathbb{R}^d$ is a floral arrangement that is given by a
SPD $\Delta$ having $k$ edges.  Then $\mu_d(\alpha)=2^{d-k}\mu(\Delta)$.
\end{Prp}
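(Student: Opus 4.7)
The plan is to reduce to the case of a floral vertex and then proceed by induction on the number of edges in the SPD.

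First I would observe that a floral arrangement $\alpha\subset\mathbb{R}^d$ whose defining SPD has $k$ edges is, as noted earlier in the paper, a cartesian product $\alpha\cong\mathbb{R}^{d-k}\times\alpha'$, where $\alpha'\subset\mathbb{R}^k$ is the floral vertex cut out by the same $\Delta$. Since each of the $2^{d-k}$ orthants of the $\mathbb{R}^{d-k}$ factor lifts each orthant occupied by $\alpha'$ to a distinct orthant in $\mathbb{R}^d$, we have $\mu_d(\alpha)=2^{d-k}\mu_k(\alpha')$. Thus it suffices to establish $\mu_k(\alpha')=\mu(\Delta)$ whenever $\alpha'\subset\mathbb{R}^k$ is the floral vertex determined by an SPD $\Delta$ with $k$ edges.

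Next I would prove this by induction on $k$. The base case $k=1$ is trivial: $\Delta=\arx$ gives $\alpha'$ a half-space, occupying one orthant, and $\mu(\Delta)=1$. For the inductive step, split on the top-level operation of $\Delta$. If $\Delta=\Delta_1\wedge\Delta_2$ with $k_1+k_2=k$, then $\alpha'$ is the cartesian product of the floral vertices $\alpha_1'$, $\alpha_2'$ supported in complementary coordinate subspaces, so $\mu_k(\alpha')=\mu_{k_1}(\alpha_1')\mu_{k_2}(\alpha_2')$; by the induction hypothesis this equals $\mu(\Delta_1)\mu(\Delta_2)=\mu(\Delta)$, matching the multiplicative rule in the definition of $\mu$.

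If instead $\Delta=\Delta_1\vee\Delta_2$, then viewed in $\mathbb{R}^k$ each $\alpha_i'$ factors as (the floral vertex on its $k_i$ half-spaces) $\times\,\mathbb{R}^{k-k_i}$, so by the inductive hypothesis $\mu_k(\alpha_i')=2^{k-k_i}\mu(\Delta_i)$, while $\alpha_1'\cap\alpha_2'$ is the cartesian product of the two smaller floral vertices and occupies $\mu(\Delta_1)\mu(\Delta_2)$ orthants. Inclusion-exclusion then gives
$$\mu_k(\alpha')=2^{k_2}\mu(\Delta_1)+2^{k_1}\mu(\Delta_2)-\mu(\Delta_1)\mu(\Delta_2).$$
On the other hand, the definition of $\mu$ combined with DeMorgan (used in the form $\overline{\Delta_1\vee\Delta_2}=\overline{\Delta_1}\wedge\overline{\Delta_2}$) gives $\mu(\Delta)=2^k-(2^{k_1}-\mu(\Delta_1))(2^{k_2}-\mu(\Delta_2))$, which expands to the same expression. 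This completes the induction.

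The only real obstacle is the disjunctive case: one has to verify carefully that inclusion-exclusion on the set of orthants, applied to the two pieces which sit in different codimensions inside $\mathbb{R}^k$, matches the complement-based recursion defining $\mu$. This is purely a bookkeeping identity once the DeMorgan rewriting is performed, but it is the one step where the definition of $\mu$ via complementation (rather than an intrinsic rule for $\vee$) must be reconciled with the geometry.
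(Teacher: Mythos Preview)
Your argument is correct. The paper itself offers no proof of this proposition; it is simply introduced with the phrase ``This function $\mu$ is evidently related to the functions $\mu_d$,'' so there is nothing to compare against at the level of detail.

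That said, there is a slightly more direct route that is presumably what the author had in mind with ``evidently.'' Rather than handling the disjunctive case via inclusion--exclusion and then matching it to the complement-based recursion for $\mu$, one can verify the three defining rules for $\mu$ directly on the geometric side: the half-line occupies one orthant; the conjunction is a cartesian product, so orthant counts multiply; and the complementary arrangement $\overline{\alpha'}$ occupies exactly the $2^k-\mu_k(\alpha')$ orthants not occupied by $\alpha'$. Since these three rules determine $\mu$ uniquely by structural induction on $\Delta$, the identity $\mu_k(\alpha')=\mu(\Delta)$ follows immediately, and the factor $2^{d-k}$ comes from the product with $\mathbb{R}^{d-k}$ exactly as you argued. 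Your inclusion--exclusion computation in the $\vee$ case is of course equivalent to checking rule~(3), just unpacked one step further; both approaches are equally valid, and yours has the virtue of making the bookkeeping explicit.
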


If $(\Delta,s)$ is a signed SPD on $k$ edges
and $\alpha\subset\mathbb{R}^d$ is a floral arrangement determined by
$(\Delta,s)$, then we abuse notation slightly by defining $\mu(\alpha):=\mu(\Delta)$.

\subsubsection{Signed volume}

Suppose $(\Delta,s)$ is a signed SPD.  We define $\tau(\Delta,s)$ inductively
as follows.

\begin{enumerate}

\item
If $\Delta$ is the diagram with a single edge, then
$\tau(\Delta,1)=1$.

\item
$\tau(\Delta_1\wedge\Delta_2,(s_1,s_2))=\tau(\Delta_1,s_1)\tau(\Delta_2,s_2)$ for all 
$(\Delta_1,s_1)$, $(\Delta_2,s_2)$.
 
\item
$\tau(\overline{\Delta},-s)+\tau(\Delta,s)=0$ for all $(\Delta,s)$.

\end{enumerate}

As above, we can relate $\tau$ and $\tau_d$:

\begin{Prp}
Suppose $\alpha\subset\mathbb{R}^d$ is a floral arrangement determined by a signed
SPD $(\Delta,s)$ having $k$ edges.
on $k$ half-spaces.  Then
$$\tau_d(\alpha)=\left\{\begin{array}{cl}
\tau(\Delta,s) & \hbox{if }k=d, \\
0 & \hbox{if }k<d. \\
\end{array}\right.$$
\end{Prp}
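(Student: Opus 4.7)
The plan is to handle the two cases $k<d$ and $k=d$ separately and then, for the equality $\tau_d(\alpha)=\tau(\Delta,s)$ when $k=d$, to proceed by structural induction on the signed SPD, verifying that $\tau_d\circ(\Delta,s)\mapsto\alpha(\Delta,s)$ satisfies the three defining rules of $\tau$.

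For the case $k<d$, I would invoke the observation, already noted in the text, that every floral arrangement on $k<d$ half-spaces has an expression $\alpha\cong\mathbb{R}^{d-k}\times\alpha'$ with $\alpha'\subset\mathbb{R}^k$. Pick any coordinate direction $i\in[d]$ in which $\alpha$ is free. The occupied orthants then come in pairs $(s_1,\ldots,s_i,\ldots,s_d)$ and $(s_1,\ldots,-s_i,\ldots,s_d)$, since flipping $s_i$ leaves membership in $\alpha$ unchanged. These two orthants carry opposite signs, so they cancel, yielding $\tau_d(\alpha)=0$.

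For the case $k=d$, I would induct on the signed SPD. The base case is a single edge ($d=1$): then $\alpha$ is a closed half-line, which occupies exactly the orthant of sign $s_1$, so $\tau_1(\alpha)=s_1=\tau(\Delta,s)$. For the inductive step, I would verify the three rules from the definition of $\tau$. Rule 2 (conjunction): if $\alpha=\alpha_1\wedge\alpha_2$, then as noted above $\alpha$ is the cartesian product of $\alpha_1\subset\mathbb{R}^{d_1}$ and $\alpha_2\subset\mathbb{R}^{d_2}$, so an orthant $(t_1,t_2)\in\{\pm 1\}^{d_1}\times\{\pm 1\}^{d_2}$ is occupied by $\alpha$ iff $t_1$-orthant is occupied by $\alpha_1$ and $t_2$-orthant by $\alpha_2$; since the sign of $(t_1,t_2)$ factors as the product of signs, we get $\tau_d(\alpha)=\tau_{d_1}(\alpha_1)\tau_{d_2}(\alpha_2)$, matching the inductive value. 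Rule 3 (complementation): an orthant is occupied by $\overline{\alpha}$ exactly when it is not occupied by $\alpha$, so
$$\tau_d(\alpha)+\tau_d(\overline{\alpha})=\sum_{s\in\{\pm 1\}^d}\prod_{i=1}^d s_i=\prod_{i=1}^d(1+(-1))=0$$
for $d\geq 1$, matching the defining identity for $\tau$. Since every SPD on more than one edge is either a conjunction or a disjunction (handled via its dual), and both cases have been verified, the inductive claim follows and hence $\tau_d(\alpha)=\tau(\Delta,s)$.

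The main obstacle, such as it is, is the conjunction step, since it depends on the (noted but not fully formalized) identification of $\alpha_1\wedge\alpha_2$ with the cartesian product of $\alpha_1$ and $\alpha_2$; everything else is a quick computation. The complementation identity is essentially the vanishing of the character sum over $\{\pm 1\}^d$ for $d\geq 1$, so in the end the proposition reduces to that elementary fact combined with the factorization of signs under cartesian product.
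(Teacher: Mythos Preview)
Your argument is correct. The paper states this proposition without proof, treating it as a direct consequence of the definitions (just as it does for the analogous statement relating $\mu$ and $\mu_d$); your write-up supplies precisely the details one would fill in. The case $k<d$ is handled by pairing orthants along a free coordinate, and for $k=d$ you verify that $(\Delta,s)\mapsto\tau_d(\alpha(\Delta,s))$ satisfies the three defining rules of $\tau$; since those rules determine $\tau$ uniquely (every signed SPD is reachable from the single positive edge via conjunction and dualization), this forces $\tau_d(\alpha)=\tau(\Delta,s)$. The one place worth a sentence of extra care is the disjunction step: you reduce it to the conjunction case via the dual, which is legitimate because $\overline{\Delta_1}$ and $\overline{\Delta_2}$ have strictly fewer edges than $\Delta$, so the induction hypothesis applies to them.
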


If $(\Delta,s)$ is a signed SPD on $k$ edges
and $\alpha\subset\mathbb{R}^d$ is the floral arrangement determined
$(\Delta,s)$, then we again abuse notation slightly by defining $\tau(\alpha):=\tau(\Delta,s)$.

\begin{Prp}
If $\alpha$ is a floral vertex determined by the signed series-parallel
diagram $(\Delta,s)$, then $\tau(\alpha)=(-1)^s\sigma(\Delta)$.
\end{Prp}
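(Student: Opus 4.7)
The plan is to prove the identity $\tau(\Delta,s)=(-1)^s\sigma(\Delta)$ by structural induction on the SPD $\Delta$, exploiting the parallel between how $\sigma$ behaves under series/parallel connection and how the three defining rules for $\tau$ are structured. Both sides are multiplicative across a series connection, and both pick up an extra sign across a parallel connection; the induction is designed to make these match term by term.

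For the base case, $\Delta=\arx$ is a single edge. Here $\rho(\Delta)=0$, so $\sigma(\Delta)=1$. The case $s=+1$ follows from rule~1 of the definition of $\tau$. The case $s=-1$ is obtained from rule~3 together with rule~1, using the fact that the dual of a single edge is again a single edge, so $\tau(\arx,-1)=-\tau(\arx,+1)=-1=(-1)^{-1}\cdot 1$.

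For the inductive step, I would split into the two possible top-level connections. If $\Delta=\Delta_1\wedge\Delta_2$ with $s=(s_1,s_2)$, then rule~2 gives $\tau(\Delta,s)=\tau(\Delta_1,s_1)\tau(\Delta_2,s_2)$, and applying the inductive hypothesis to each factor together with the identities $(-1)^s=(-1)^{s_1}(-1)^{s_2}$ and $\sigma(\Delta_1\wedge\Delta_2)=\sigma(\Delta_1)\sigma(\Delta_2)$ closes this case immediately. If instead $\Delta=\Delta_1\vee\Delta_2$, the obstacle is that no rule directly splits $\tau$ over a parallel connection. The remedy is a three-step detour: first invoke rule~3 to pass from $\tau(\Delta,s)$ to $-\tau(\overline{\Delta},-s)$; then observe $\overline{\Delta}=\overline{\Delta_1}\wedge\overline{\Delta_2}$ and apply rule~2 to split; finally apply rule~3 once more to each factor $\overline{\Delta_i}$ to return to $\tau(\Delta_i,s_i)$. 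The inductive hypothesis then applies to $\Delta_1$ and $\Delta_2$, which have strictly fewer edges than $\Delta$.

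The main obstacle in the plan is the sign bookkeeping in the parallel case: three separate applications of rule~3 appear, and one must verify that their net contribution is exactly one minus sign, which matches the $-1$ in the identity $\sigma(\Delta_1\vee\Delta_2)=-\sigma(\Delta_1)\sigma(\Delta_2)$. Since the two rule~3 applications on the factors cancel each other while the first (applied to $\Delta$ itself) survives, the signs agree and the induction goes through cleanly.
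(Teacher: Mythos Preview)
Your argument is correct. The structural induction on $\Delta$ works exactly as you describe: the base case and the series case are immediate from the definitions, and in the parallel case your three-step detour through rule~3, rule~2, and rule~3 again yields $\tau(\Delta_1\vee\Delta_2,s)=-\tau(\Delta_1,s_1)\tau(\Delta_2,s_2)$, whose extra minus sign is absorbed precisely by $\sigma(\Delta_1\vee\Delta_2)=-\sigma(\Delta_1)\sigma(\Delta_2)$.

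The paper itself states this proposition without proof, so there is no argument to compare against; your induction supplies the missing verification and is the natural one given how $\tau$ and $\sigma$ are each defined recursively through the same three clauses (single edge, series, duality/parallel).
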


If $(\Delta,s)$ is a signed SPD with $d$ edges and
$\alpha$ is the floral vertex determined by $(\Delta,s)$, then
we once again abuse notation slightly by defining $\sigma(\alpha):=\sigma(\Delta)$.

\begin{Prp}
The following hold:
(i) If $\alpha$ is the 1-dimensional half-space (i.e. a cardinal ray), then $\sigma(\alpha)=1$.
(ii) $\sigma(\alpha\wedge\beta)=\sigma(\alpha)\sigma(\beta)$ for all floral vertices $\alpha$, $\beta$.
(iii) $\sigma(\alpha)+(-1)^d\sigma(\overline{\alpha})=0$ for every floral vertex $\alpha$.
\end{Prp}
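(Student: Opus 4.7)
The plan is to verify each of the three claims by reducing to the corresponding statements about series-parallel diagrams, which were essentially established earlier (in the subsection on bouquet sign), and then invoking the abuse of notation $\sigma(\alpha) := \sigma(\Delta)$.

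For (i), I would observe that a cardinal ray is the floral vertex corresponding to the SPD $\arx$ consisting of a single edge joining two distinct terminals. From the formulas $e(\arx)=1$ and $v(\arx)=2$, the bouquet rank is $\rho(\arx)=e-v+1=0$, so $\sigma(\arx)=(-1)^0=1$.

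For (ii), I would note that if $\alpha$ and $\beta$ are floral vertices determined by SPDs $\Delta_\alpha$ and $\Delta_\beta$, then $\alpha\wedge\beta$ is by definition the floral vertex determined by $\Delta_\alpha\wedge\Delta_\beta$. The identity then reduces to the already-stated multiplicativity $\sigma(\Delta_\alpha\wedge\Delta_\beta)=\sigma(\Delta_\alpha)\sigma(\Delta_\beta)$, which follows immediately from the additivity $\rho(\Delta_1\wedge\Delta_2)=\rho(\Delta_1)+\rho(\Delta_2)$ recorded in the subsection on bouquet sign.

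For (iii), I would invoke the duality identity $\sigma(\Delta)\sigma(\overline{\Delta})=(-1)^{d-1}$ for an SPD with $d$ edges (stated earlier as a consequence of $\rho(\Delta)+\rho(\overline{\Delta})=d-1$). Since $\sigma$ takes values in $\{\pm 1\}$, this rearranges to $\sigma(\overline{\Delta})=(-1)^{d-1}\sigma(\Delta)$, whence
$$\sigma(\alpha)+(-1)^d\sigma(\overline{\alpha})=\sigma(\Delta)+(-1)^d(-1)^{d-1}\sigma(\Delta)=\sigma(\Delta)-\sigma(\Delta)=0.$$

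None of the steps present a real obstacle; the only subtlety is keeping the abuse-of-notation conventions straight, namely that $\sigma(\alpha)$ and $\sigma(\overline{\alpha})$ refer to $\sigma$ evaluated on $\Delta$ and $\overline{\Delta}$ respectively. Everything else is bookkeeping with the recursive definitions already in place.
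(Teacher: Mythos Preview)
Your proof is correct. The paper states this proposition without proof, treating it as an immediate consequence of the definitions and the earlier identities for $\rho$ and $\sigma$ on SPDs; your argument supplies exactly those routine verifications, so your approach is the intended one.
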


Figure \ref{flowers4d} displays all congruence types of floral vertices in 4 dimensions, together
with the values of $\sigma$ and $\mu$ for each.

\begin{figure}
$$\begin{array}{|c|c|c|c|}
\hline
 & \hbox{Boolean} & \hbox{Volume} & \hbox{Bouquet sign} \\
\Delta & \hbox{expression} & \mu(\Delta) & \sigma(\Delta) \\
\hline
\arxxxx & 1\cap 2\cap 3 \cap 4 & 1 & +1 \\
\hline
\arxxnxx & (1\cup 2)\cap 3\cap 4 & 3 & -1 \\
\hline
\arxxnxnx & ((1\cap 2)\cup 3)\cap 4 & 5 & -1 \\
\hline
\arxxxnx & (1\cup 2\cup 3)\cap 4 & 7 & +1 \\
\hline
\arxxuxxn & (1\cup 2)\cap (3\cup 4) & 9 & +1 \\

\hline
\arxxuxx & (1\cap 2)\cup (3\cap 4) & 7 & -1 \\
\hline
\arxxxnxn & (1\cap 2\cap 3)\cup 4 & 9 & -1 \\
\hline
\arxxnxnxn & ((1\cup 2)\cap 3)\cup 4 & 11 & +1 \\
\hline
\arxxnxxn & (1\cap 2)\cup 3\cup 4 & 13 & +1 \\
\hline
\arxxxxn & 1\cup 2\cup 3 \cup 4 & 15 & -1 \\
\hline
\end{array}$$
\caption{Floral vertices in four dimensions}
\label{flowers4d}
\end{figure}

\section{Generic Orthotopes: Global Theory}

In this section we define:

\begin{Def}
A generic orthotope of dimension $d$ is an orthogonal polytope for which every singular point is a floral vertex
on $d$ half-spaces.
\end{Def}

From our analysis of the facets of a floral vertex, we see:

\begin{Prp}
If $P$ is a generic orthotope, then every face of $P$ of dimension $k$ is a generic 
orthotope of dimension $k$.
\end{Prp}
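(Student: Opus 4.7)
Plan: Let $F\subseteq\Pi$ be a $k$-dimensional face of $P$, where $\Pi=\Pi_{I,\lambda}$ is its supporting generalized coordinate hyperplane and $V=\Pi_{I,0}$ is the parallel linear subspace through the origin. Fix a singular point $v$ of $F$ (in the sense of $F$ as an orthogonal polytope in $\Pi\cong V\cong\mathbb{R}^k$). The goal is to show that the tangent cone $T_vF$ is a floral vertex on $k$ half-spaces.

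The first move is to identify $T_vF$ with the cross-section $T_vP\cap V$. Because $T_vP$ and $V$ are both cones through the origin, their intersection is a connected cone, and a small-neighborhood argument then shows $F\cap B_\delta(v)=v+\bigl((T_vP\cap V)\cap B_\delta(0)\bigr)$, so that $T_vF=T_vP\cap V$. Next I would verify that $T_vP$ is a floral arrangement in $\mathbb{R}^d$. If $v$ is singular in $P$ this is immediate from the definition of generic orthotope. If not, then $T_vP\cong\mathbb{R}^{d-n}\times\alpha'$ with $n<d$; walking from $v$ along the flat directions until one first reaches a point whose flat part has strictly smaller dimension, and iterating, one eventually lands at a singular point $v_0$ of $P$ (the process terminates because the dimension of the flat part strictly decreases and $P$ is bounded). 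The tangent cone $T_{v_0}P$ is a floral vertex on $d$ half-spaces, and $T_vP$ is realized as the local structure of $T_{v_0}P$ at an interior point of one of its faces. Applying the Proposition asserting that each facet $\delta_i(\alpha)$ of a floral vertex is again a floral vertex (iteratively, down to the relevant face) then identifies $T_vP$ as a floral arrangement.

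Since $V=\bigcap_{i\in I}\Pi_{i,0}$ is a generalized coordinate subspace, the cross-section result for floral arrangements (applied inductively $|I|=d-k$ times to the single-hyperplane case $\alpha\cap\Pi_{i,0}$) shows that $T_vF=T_vP\cap V$ is a floral arrangement in $V\cong\mathbb{R}^k$. Write $T_vF\cong\mathbb{R}^{k-j}\times\beta$ with $\beta$ a floral vertex on $j$ half-spaces. The flat factor $\mathbb{R}^{k-j}$ is precisely the genericity region of $v$ in $F$; since $v$ is singular in $F$, this forces $k-j=0$, i.e., $j=k$. Hence $T_vF$ is a floral vertex on exactly $k$ half-spaces, as required.

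The main obstacle is the middle step: certifying that tangent cones at \emph{all} points of $P$ (not only the singular ones) are floral arrangements, since the definition of generic orthotope directly constrains only singular points. The ``walk to a nearby singular point'' argument handles this cleanly, but one must check that each intermediate tangent cone is genuinely realized as the local structure at an interior point of some face of the floral vertex $T_{v_0}P$, and that such local structures are floral arrangements. The cleanest treatment is perhaps to establish this as a separate preliminary lemma (``every tangent cone in a generic orthotope is a floral arrangement''), which then feeds directly into the cross-section and structural propositions already proved in the preceding section.
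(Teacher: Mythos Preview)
The identification $T_vF = T_vP\cap V$ in your first move is false: a face of $P$ is not a cross-section. Consider the reentrant floral vertex $\alpha=\overline{H_1}\cup\overline{H_2}\subset\mathbb{R}^2$ (occupying three quadrants). Its unique $1$-face in $\Pi_{2,0}$ is $\delta_2(\alpha)=\{x_1\geq 0,\ x_2=0\}$, a single ray, whereas the cross-section $\alpha\cap\Pi_{2,0}$ is the entire line $\{x_2=0\}$ (every point with $x_2=0$ already lies in $\overline{H_2}$). Your ``small-neighborhood argument'' would need $F\cap B_\delta(v)=(P\cap\Pi)\cap B_\delta(v)$, and this fails at every reentrant corner. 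Feeding $T_vP\cap V$ into the cross-section results does yield a floral arrangement, but not the one describing $F$; in the example it returns all of $\mathbb{R}^1$, so your final dimension count gives $j=0$ rather than $j=1$.

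The paper's route is one line citing the facet analysis rather than cross-sections. The tangent cone $T_vF$ is the \emph{face} of $\alpha=T_vP$ lying in $V$, namely the iterated facet $\delta_{i_{d-k}}\cdots\delta_{i_1}(\alpha)$ for $I=\{i_1,\ldots,i_{d-k}\}$, and repeatedly applying the proposition that each $\delta_i$ of a floral vertex is again a floral vertex on one fewer half-space (the very result you already invoke in your middle paragraph) shows directly that $T_vF$ is a floral vertex on $k$ half-spaces. The fix is simply to use that facet proposition for the main step in place of the cross-section machinery.
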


From our analysis of cross-sections of a floral arrangement, we see:

\begin{Prp}
Suppose $P\subset\mathbb{R}^d$ is a generic orthotope and $\Pi\subset\mathbb{R}^d$ is a
generalized hyperplane.  Then $P\cap\Pi$ is a generic orthotope.
\end{Prp}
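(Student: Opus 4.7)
The plan is to reduce to the case $\Pi = \Pi_{i,\lambda}$ of a single coordinate hyperplane and then apply the edge/residual cross-section propositions from the local theory pointwise. The reduction is an induction on $|I|$: writing $\Pi_{I,\lambda} = \Pi_{j,\lambda(j)} \cap \Pi_{I\setminus\{j\},\lambda}$, first intersect with $\Pi_{j,\lambda(j)}$ to land inside $\Pi_{j,\lambda(j)} \cong \mathbb{R}^{d-1}$, then iterate inside that smaller ambient space. So the real content is the case $|I|=1$.

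For that case, fix a singular point $w$ of $Q := P \cap \Pi_{i,\lambda}$ viewed in $\Pi_{i,\lambda} \cong \mathbb{R}^{d-1}$, and let $T_w(P)$ denote the tangent cone of $P$ at $w$. The tangent cone $T_w(Q)$ equals the intersection of $T_w(P)$ with the translate of $\Pi_{i,\lambda}$ into the tangent space at $w$, namely $\Pi_{i,0}$. First I would verify that $T_w(P)$ is always a floral arrangement in $\mathbb{R}^d$: if $w$ is a vertex of $P$ this is immediate from the definition of a generic orthotope; if $w$ lies in the relative interior of a positive-dimensional face $F$, then since $F$ is itself a generic orthotope by the previous proposition, $T_w(P)$ decomposes as $\mathbb{R}^k \times \alpha'$ for a floral vertex $\alpha'$ in a transversal subspace, so it is again a floral arrangement. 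Write the SPD of $T_w(P)$ with edge set $E \subset [d]$. If $i \in E$, then $T_w(P) \cap \Pi_{i,0}$ is congruent to either the edge cross-section (represented by $\Delta \setminus i$) or the residual cross-section (represented by $\Delta \setminus \Delta_i$), both of which were already shown to be floral arrangements in $\mathbb{R}^{d-1}$; which alternative occurs is determined by whether $\lambda$ lies on the same side of $w$ as the edge of $T_w(P)$ corresponding to $i$. If $i \notin E$, then $T_w(P)$ has an $\mathbb{R}$-factor along the $i$th axis and intersecting with $\Pi_{i,0}$ simply drops that factor, again leaving a floral arrangement. Hence $T_w(Q)$ is a floral arrangement in $\mathbb{R}^{d-1}$; because $w$ is singular in $Q$ its genericity degree is zero, which forces the defining SPD to use all $d-1$ coordinates, i.e.\ $T_w(Q)$ is a floral vertex on $d-1$ half-spaces, as required.

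The main obstacle is the tangent-cone bookkeeping: establishing cleanly that tangent cones at every point of a generic orthotope (not just at vertices) are floral arrangements, and then correctly selecting which subcase of the cross-section propositions applies depending on whether $i$ appears in the SPD of $T_w(P)$ and, when $i \in E$, whether $\lambda$ falls on the edge side or the residual side of $w$ relative to the orientation of the edge $i$ predicted by the edge-direction proposition. One also needs a brief verification that $Q$ is pure $(d-1)$-dimensional at its singular points so that the notion of tangent cone applies to $Q$ at all. Once this case analysis is marshaled, the proposition follows mechanically from the local theory already developed.
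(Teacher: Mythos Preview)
Your approach is correct and is exactly what the paper intends: the paper offers no detailed argument beyond the sentence ``From our analysis of cross-sections of a floral arrangement, we see'' the result, and your proposal spells out that reduction (induction on $|I|$ down to a single hyperplane, then the edge/residual cross-section propositions applied pointwise).

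One place to tighten: when $w$ lies in the relative interior of a $k$-face $F$, invoking the previous proposition (``$F$ is itself a generic orthotope'') does not justify that the transversal factor $\alpha'$ in $T_w(P)\cong\mathbb{R}^k\times\alpha'$ is floral---that proposition speaks only to the intrinsic structure of $F$, not to how $P$ sits around $F$. The correct reason comes straight from the local theory: setting some variables of a read-once Boolean function to constants yields another read-once function, so the tangent cone of a floral vertex at an interior point of one of its $k$-faces is automatically a floral arrangement on the remaining $d-k$ half-spaces; transporting this from a vertex $v\in\overline F$ along the genericity region gives the claim at $w$. You already flagged this step as the main bookkeeping obstacle, so this is a refinement of the justification rather than a new gap.
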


We can also say:

\begin{Prp}
The 1-dimensional skeleton of a generic orthotope is a bipartite graph of degree $d$.
\end{Prp}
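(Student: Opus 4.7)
The claim has two parts: $d$-regularity, and bipartiteness.

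For $d$-regularity I would invoke the structural result on edges of a floral vertex proved above: at a floral vertex $\alpha$ on $d$ half-spaces, for each $i\in[d]$ exactly one of the two cardinal rays generated by $\pm e_i$ is an edge of $\alpha$. Applied to the tangent cone at any vertex $v$ of a generic orthotope $P$, this produces exactly $d$ edge-rays at $v$, one parallel to each coordinate axis. Each such ray lifts to an honest edge of $P$: locally $P$ agrees with the tangent cone, so a short segment in that direction lies in $P$, and since $P$ is a finite union of axis-aligned boxes the segment extends as a $1$-dimensional genericity region until it first meets a singular point, which is by definition another vertex of $P$. Hence every vertex of the $1$-skeleton has exactly $d$ incident edges, one per cardinal axis.

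For bipartiteness I would show that every closed walk in the $1$-skeleton has even length. Every edge of $P$ is a positive-length segment parallel to some $e_i$, so can be oriented as a nonzero multiple of either $+e_i$ or $-e_i$. In a closed walk $v_0\to v_1\to\cdots\to v_n=v_0$, the displacement vectors sum to zero, and because the cardinal directions $\{e_1,\dots,e_d\}$ are linearly independent, the total signed displacement along each axis vanishes. Consequently, for each $i$, the sum of the (positive) edge lengths traversed in direction $+e_i$ equals the sum of those traversed in direction $-e_i$; in particular the number of edges of the walk parallel to $e_i$ is even. Summing over $i$, $n$ is even, so the graph has no odd cycle and admits a proper $2$-coloring.

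The only step that demands any care is the passage from ``edge of the tangent cone at $v$'' to ``edge of $P$ incident to $v$,'' which is where the local theory plugs into the global picture; once this identification is in place, both $d$-regularity and the even-cycle argument for bipartiteness are immediate. Everything else is bookkeeping with cardinal directions.
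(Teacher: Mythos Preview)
Your treatment of $d$-regularity is fine and agrees with the paper's implicit use of the local analysis of floral vertices.

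The bipartiteness argument, however, has a real gap. From ``the signed displacements along axis $i$ sum to zero'' you infer ``the number of edges of the walk parallel to $e_i$ is even,'' but this does not follow: edges of a generic orthotope may have different lengths, so three edges parallel to $e_1$ with signed lengths $+1,+1,-2$ already cancel. Concretely, the closed path through
\[
(0,0,0),\ (1,0,0),\ (1,1,0),\ (2,1,0),\ (2,1,1),\ (0,1,1),\ (0,0,1),\ (0,0,0)
\]
has length $7$, uses only cardinal steps, has no two consecutive steps along the same axis, and closes up; nothing in your argument distinguishes this from a cycle in a $1$-skeleton. Such a cycle is of course not realized by any generic orthotope, but your proof never invokes enough of the floral structure to exclude it---you have only used the one-edge-per-axis property and the closure constraint.

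The paper proceeds differently: it exhibits an explicit $2$-coloring. For adjacent vertices with floral arrangements $\alpha_1,\alpha_2$ joined by an edge parallel to $e_i$, the edge cross-section gives $\Delta_1\backslash i=\Delta_2\backslash i$. Feeding this into the edge-orientation formula (the edge ray at each endpoint is generated by $\sigma(\Delta_k)\sigma(\Delta_k\backslash i)s_{k,i}e_i$), and using that these two rays point in opposite directions, one obtains $(-1)^{s_1}\sigma(\Delta_1)=-(-1)^{s_2}\sigma(\Delta_2)$, i.e.\ $\tau(\alpha_1)+\tau(\alpha_2)=0$. Thus $v\mapsto\tau(\alpha(v))\in\{\pm1\}$ is a proper $2$-coloring. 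This is precisely the missing ingredient: bipartiteness here is a consequence of the bouquet-sign calculus, not of metric closure alone.
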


\begin{proof}
Let $P$ be a generic orthotope and
suppose $\alpha_1$ and $\alpha_2$ are the floral arrangements of adjacent
vertices of $P$.  We will show that $\tau(\alpha_1)+\tau(\alpha_2)=0$.
Assume that the floral vertices $\alpha_1$, $\alpha_2$ are determined by
the signed SPDs $(\Delta_1,s_1)$, $(\Delta_2,s_2)$ respectively.
Assume that the edge joining the vertices is parallel to $e_i$.  Let $(\Delta,s)$
be the SPD that represents the edge cross-section.  
Then, from our analysis of edge cross-sections, we have 
$\Delta=\Delta_1\backslash i=\Delta_2\backslash i$.
Let $s_{1,i},s_{2,i}\in\{\pm 1\}$ be the $i$th component of $s_1$, $s_2$, respectively.
The key observation is that the cardinal direction of the edge starting
at one of the vertices is the negative of the cardinal direction of the edge
starting at the other vertex.
Recall from our discussion of edge orientations that the cardinal ray generated by
$\sigma(\Delta_1)\sigma(\Delta_1\backslash i)s_{1,i}e_i$
is an edge of $\alpha_1$,
while the cardinal ray generated generated by
$\sigma(\Delta_2)\sigma(\Delta_2\backslash i)s_{2,i}e_i$
is an edge of $\alpha_2$.  Thus, whether $s_{1,i}$ and $s_{2,i}$ have
equal or opposite sign, we have
$$\tau(\Delta_1,s_1)=(-1)^{s_1}\sigma(\Delta_1)=-(-1)^{s_2}\sigma(\Delta_2)=-\tau(\Delta_2,s_2).$$
\end{proof}

\subsection{Approximation}

For a pair $P,Q\subset\mathbb{R}^d$ of compact subsets, define
Define the {\it Hausdorff distance function} by
$$\mathrm{Hdist}(P,Q)=\max \left\{\sup_{p\in P} d(p,Q),\sup_{q\in Q} d(q,P)\right\},$$
where
$$d(x,P)=\inf_{p\in P} \|x-p\|_\infty$$
denotes the distance from a point $x$ to $P$ induced by the $L^\infty$ norm.

\begin{Thm}
Given any compact subset $E\subset\mathbb{R}^d$ and any $\epsilon>0$,
there is a generic orthotope $P$ such that $\mathrm{Hdist}(E,P)<\epsilon$.
\label{density}
\end{Thm}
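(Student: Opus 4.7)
The plan is to construct $P$ in two stages: first approximate $E$ coarsely by a union of axis-aligned cubes, and then perturb the facets of these cubes so that the resulting orthogonal polytope is genuinely generic.

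For the coarse stage, compactness of $E$ furnishes a finite set $F \subset E$ with the property that every point of $E$ lies within $L^\infty$-distance $\epsilon/3$ of some point of $F$. Set
$$P_0 = \bigcup_{x \in F} \left( x + \left[-\tfrac{\epsilon}{3}, \tfrac{\epsilon}{3}\right]^d \right).$$
Both Hausdorff inequalities are immediate: every $x\in F\subset E$ is a cube center of $P_0$, and every point of $P_0$ lies within $\epsilon/3$ of some $x\in F\subset E$, so $\mathrm{Hdist}(E, P_0) \le \epsilon/3$.

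For the perturbation stage, replace each cube $x + [-\tfrac{\epsilon}{3}, \tfrac{\epsilon}{3}]^d$ by an axis-aligned box
$$B_x \;=\; \prod_{i=1}^d \left[ x_i - \tfrac{\epsilon}{3} + \eta^-_{x,i},\ x_i + \tfrac{\epsilon}{3} + \eta^+_{x,i} \right],$$
where the perturbations $\eta^{\pm}_{x,i}$ are chosen so that (i) $|\eta^{\pm}_{x,i}| < \epsilon/3$, which preserves the Hausdorff bound on the union $P := \bigcup_{x\in F} B_x$, and (ii) for each fixed axis $i$, all the perturbed endpoints $x_i \pm \epsilon/3 + \eta^\pm_{x,i}$ are pairwise distinct as $x$ ranges over $F$. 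Each condition in (ii) rules out only a measure-zero set of values of $\eta$, so valid perturbations exist.

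It remains to check that $P$ is a generic orthotope, and this is where the real content lies. Condition (ii) guarantees that at each vertex $v$ of $P$, exactly one supporting hyperplane in each coordinate direction passes through $v$, so the tangent cone at $v$ is a union of orthants with $d$ distinct hyperplanes. The main obstacle is to verify that this union always corresponds to a read-once Boolean function. The key observation is that any orthant occupied at $v$ is witnessed by some box $B_x$ having $v$ on its boundary, and distinctness forces each of the $d$ hyperplanes through $v$ to arise as a facet of exactly one box among those incident to $v$. This rules out the non-read-once configurations (for instance, the 2-dimensional ``bowtie'' requires two boxes sharing both facet coordinates at $v$; the 3-dimensional majority requires pairwise sharing of facet coordinates among three boxes; and similarly in higher dimensions). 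To complete the argument for arbitrary $d$, I would induct on $d$ using the cross-sectional machinery from the previous section: an axis-aligned hyperplane $\Pi_{i,v_i}$ through $v$ slices $P$ into a $(d-1)$-dimensional union of perturbed boxes whose facet coordinates remain distinct, so by induction the cross-section is generic, and the tangent cone at $v$ is reassembled from the two cross-sectional floral arrangements together with the orientation of the edge of $P$ at $v$ along $e_i$, yielding a read-once structure. Hence every vertex of $P$ is floral, $P$ is a generic orthotope, and $\mathrm{Hdist}(E, P) < \epsilon$.
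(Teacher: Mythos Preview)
Your two-stage construction (approximate by cubes, then perturb facets so that all supporting hyperplanes are distinct) is exactly the route the paper takes, only you skip the intermediate rational/integral rescaling the paper uses. The substantive question is the last paragraph: why is the perturbed union generic?

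Here your ``key observation'' already contains the complete argument, and you should finish it directly rather than detouring through induction on $d$. At a vertex $v$ of $P=\bigcup_{x\in F}B_x$, the tangent cone is $\bigcup_{x\in S}T_x$, where $S=\{x:v\in B_x\}$ and $T_x=\bigcap_{j\in J_x}s_{x,j}H_j$ is the tangent cone of the box $B_x$ at $v$, with $J_x\subset[d]$ the set of coordinate directions in which $v$ lies on a facet of $B_x$. Distinctness of facet coordinates forces the $J_x$ to be pairwise disjoint; if their union is all of $[d]$ (which it is whenever $v$ is genuinely a vertex), the resulting disjunction of conjunctions uses each half-space exactly once and is therefore read-once. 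The paper isolates precisely this step as a lemma: if $P$ and $Q$ are generic orthotopes with no supporting hyperplane in common, then $P\cup Q$ and $P\cap Q$ are generic, since the tangent cone at any point is $\alpha\vee\beta$ (resp.\ $\alpha\wedge\beta$) with $\alpha,\beta$ floral on disjoint half-space sets.

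Your proposed induction on $d$, by contrast, has a gap at the ``reassembly'' step. Knowing that the two parallel cross-sections $P\cap\Pi_{i,v_i\pm\delta}$ have floral tangent cones $\alpha^\pm$ at $v$ is not by itself enough: the full tangent cone is $(H_i\cap\tilde\alpha^+)\cup(\overline{H_i}\cap\tilde\alpha^-)$, and for arbitrary floral $\alpha^+,\alpha^-$ this need not be read-once (the variables $H_j$, $j\ne i$, can occur in both). To close that gap you would need to use that $\alpha^+$ and $\alpha^-$ differ only by the contribution of the single box whose $i$th facet passes through $v$---and unwinding that brings you right back to the disjoint-$J_x$ argument above. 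So drop the induction and state the direct read-once computation; then your proof matches the paper's.
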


\begin{Lem}
Suppose $P,Q\subset\mathbb{R}^d$ are generic orthotopes that have no supporting hyperplanes
in common.  Then $P\cap Q$ and $P\cup Q$ are generic orthotopes.
\end{Lem}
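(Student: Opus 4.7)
The plan is to reduce the problem to a local analysis at each singular point and show that its tangent cone in $P\cap Q$ or $P\cup Q$ is again a floral vertex on $d$ half-spaces. Since $P$ and $Q$ are finite unions of axis-aligned boxes, so are $P\cap Q$ and $P\cup Q$, so nothing beyond the floral-vertex condition requires checking. For any point $v$, write $\alpha_P$ and $\alpha_Q$ for the tangent cones of $P$ and $Q$ at $v$, with the convention that $\alpha_P=\mathbb{R}^d$ if $v$ is interior to $P$ (and similarly for $Q$). Then the tangent cone of $P\cap Q$ at $v$ is $\alpha_P\cap\alpha_Q$, and that of $P\cup Q$ at $v$ is $\alpha_P\cup\alpha_Q$.

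First I would invoke the structural decomposition of floral arrangements noted early in the local theory: $\alpha_P\cong\alpha_P'\times\mathbb{R}^{[d]\setminus I}$, where $I\subset[d]$ collects the directions $i$ such that $x_i=v_i$ is a supporting hyperplane of $P$ at $v$ and $\alpha_P'\subset\mathbb{R}^I$ is a floral vertex on $|I|$ half-spaces; similarly $\alpha_Q\cong\alpha_Q'\times\mathbb{R}^{[d]\setminus J}$ for some $J\subset[d]$. The sole use of the no-common-supporting-hyperplanes hypothesis enters here: if $i\in I\cap J$, then $x_i=v_i$ would be a supporting hyperplane of both $P$ and $Q$, contradicting the assumption. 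Hence $I\cap J=\emptyset$, and setting $K=[d]\setminus(I\cup J)$ yields an orthogonal splitting $\mathbb{R}^d=\mathbb{R}^I\oplus\mathbb{R}^J\oplus\mathbb{R}^K$.

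Against this splitting the tangent cones act in disjoint factors, so
\[\alpha_P\cap\alpha_Q=(\alpha_P'\wedge\alpha_Q')\times\mathbb{R}^K,\qquad \alpha_P\cup\alpha_Q=(\alpha_P'\vee\alpha_Q')\times\mathbb{R}^K,\]
where $\wedge$ realizes the cartesian product and $\vee$ its dual (the parallel connection of SPDs), both of which yield floral vertices on $|I|+|J|$ half-spaces. A singular point has a tangent cone invariant under no nontrivial coordinate translation, which forces $K=\emptyset$ and hence $I\sqcup J=[d]$. In that case, the tangent cone is precisely the floral vertex $\alpha_P'\wedge\alpha_Q'$ (respectively $\alpha_P'\vee\alpha_Q'$) on $d$ half-spaces, as required.

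The main obstacle I anticipate is organizing the bookkeeping across the various configurations of $v$: a vertex of $P$ interior to $Q$ corresponds to $I=[d]$, $J=\emptyset$; a transverse meeting of positive-codimension strata of the two boundaries corresponds to intermediate $|I|$ and $|J|$; and so on. Once the uniform factored decomposition above is accepted, every configuration falls into place. A secondary step to verify is that the tangent cone at a non-vertex boundary point of a generic orthotope really does have the claimed product structure $\alpha'\times\mathbb{R}^{[d]\setminus I}$, which follows from the earlier propositions that faces and orthographic cross-sections of a generic orthotope are themselves generic orthotopes.
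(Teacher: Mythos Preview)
Your argument is correct and follows essentially the same approach as the paper: both reduce to a local tangent-cone computation and use the disjointness of supporting hyperplanes to conclude that the tangent cone at a singular point of $P\cap Q$ (respectively $P\cup Q$) is the series (respectively parallel) connection $\alpha_P'\wedge\alpha_Q'$ (respectively $\alpha_P'\vee\alpha_Q'$), hence a floral vertex on $d$ half-spaces. Your version is more explicit than the paper's, which simply asserts that the tangent cone at such a vertex is $\alpha\wedge\beta$ and that this is read-once; your index-set bookkeeping with $I,J,K$ and the observation that $K=\emptyset$ at a singular point makes that implicit step transparent.
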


\begin{proof}
We must show that every vertex of $P\cap Q$ and every vertex of $P\cup Q$ is floral.
Suppose $v$ is a vertex of $P\cap Q$.  Then there are floral arrangements $\alpha$
and $\beta$ such that the tangent cone at $v$ in $P$ (respectively $Q$) is $\alpha$
(respectively $\beta$).  However, $P$ and $Q$ have no supporting hyperplanes in common,
so the floral arrangement at $v\in P\cap Q$ is $\alpha\wedge\beta$.  However, since
$P$ and $Q$ have no common supporting hyperplanes, $\alpha\wedge\beta$ is represented
by a read-once Boolean function.  The same argument holds for the case when
$v$ is a vertex of the union $P\cup Q$.
\end{proof}

Now we argue a proof of Theorem \ref{density}.
Suppose a compact set $E\subset\mathbb{R}^d$ and $\epsilon>0$ are given.  
First choose a rational orthogonal polytope $Q_0$ such that $\mathrm{Hdist}(Q_0,E)<\frac{\epsilon}{2}$.
That $Q_0$ exists is guaranteed by the compactness of $E$.
Let $n$ be the least positive integer such that $Q_1=nQ_0$ is an integral orthotope.
Let $S\subset\mathbb{Z}^d$ be the finite set such that $Q_1$ is a union of translates
$$Q_1=\bigcup_{v\in S} (v+f_v),$$
where $f_v$ is a non-empty face of the standard unit cube $I^d$ for each $v\in S$.
For each $v\in S$, choose an orthogonally aligned $d$-dimensional box $B_v$ such that
(i) $v+f_v$ lies in the interior of $B_v$,
(ii) $\mathrm{Hdist}(B_v,v+f_v)<\frac{\epsilon}{2}$,
and 
(iii) no two of the boxes $B_v$ share a supporting hyperplane.
That these boxes $B_v$ exist is justified by the fact that $S$ is finite.
Finally, let 
$$P=\frac{1}{n}\bigcup_{v\in S} B_v.$$
Evidently we have
$$\mathrm{Hdist}(P,Q_0)<\frac{\epsilon}{n}.$$
From the triangle inequality,
$$\mathrm{Hdist}(P,Q)\leq\mathrm{Hdist}(P,Q_0)+\mathrm{Hdist}(Q_0,Q)<\frac{\epsilon}{2n}+\frac{\epsilon}{2}=
\frac{1+n}{2n}\epsilon\leq \epsilon.$$
Moreover, since the supporting hyperplanes of the boxes $B_v$ for $v\in S$ are distinct,
$P$ is a generic orthotope.  

The lemma above also helps to see why we regard generic orthotopes as ``generic'':
Suppose $P$ is a generic orthotope and $\Pi$ is a supporting hyperplane. 
Then we may ``shift'' $\Pi$ in a direction perpendicular to $\Pi$ while leaving all other
supporting hyperplanes fixed.  For example, one may accomplish such a transformation
using a piece-wise linear function.  One may use such a shift, provided it does not pass
across another supporting hyperplane parallel to $\Pi$, to construct another generic orthotope
$P'$ which has the same face poset as $P$.

The space of generic orthotopes is not open in the space of all orthogonal
polytopes, as the following example demonstrates.  Let $P=[0,2]\times[0,2]\times[0,1]$ and for each $\epsilon>0$
define
$$Q_\epsilon=P\cup\left([0,1]\times[0,1]\times[1,1+\epsilon]\right)\cup\left([1,2]\times[1,2]\times[1,1+\epsilon]\right).$$
Then $P$ is a generic orthotope, but $Q_\epsilon$ is an orthogonal polytope which is never
a generic orthotope.  Moreover, we have
$\lim_{\epsilon\rightarrow 0}\mathrm{Hdist}(P,Q_\epsilon)=0.$

\subsection{Volume and Euler characteristic}

This section shows several combinatorial formulas concerning generic orthotopes.

\begin{Thm}
Suppose $P$ is an integral generic orthotope.  Then
$$\mathrm{volume}(P)=2^{-d}\sum_{v\in P\cap\mathbb{Z}^d} \mu_d(\alpha(v)),$$
where $\alpha(v)$ denotes the floral arrangement at $v$.
\end{Thm}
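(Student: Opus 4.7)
The approach is to prove the formula by a double count of incidences between lattice points of $P$ and the unit lattice cubes contained in $P$. Since $P$ is an integral generic orthotope, it is pure $d$-dimensional and coincides with the union of all unit cubes $C_w := w+[0,1]^d$ (with $w \in \mathbb{Z}^d$) that are contained in $P$. Writing $U(P)$ for this set of cubes, I would start from the identity $\mathrm{volume}(P)=|U(P)|$, so the task reduces to showing that the right-hand side of the formula equals $2^d\,|U(P)|$.

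The heart of the proof is a local claim which I expect to be the main obstacle: for every $v \in P \cap \mathbb{Z}^d$ and every sign vector $s \in \{\pm 1\}^d$, the orthant $\Omega_s$ is contained in the tangent floral arrangement $\alpha(v)$ if and only if the unit cube
\[
C(v,s) \;:=\; v+\prod_{i=1}^{d} J_{s_i}, \qquad J_{+1}=[0,1],\ J_{-1}=[-1,0],
\]
is an element of $U(P)$. The ``$\Leftarrow$'' direction is immediate: if $C(v,s)\subset P$, then $P$ contains a neighborhood of $v$ in the direction of $\Omega_s$, so $\Omega_s \subset \alpha(v)$. For the harder ``$\Rightarrow$'' direction, I would argue as follows. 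The assumption $\Omega_s \subset \alpha(v)$ provides some $\delta>0$ with $v+(\Omega_s \cap[-\delta,\delta]^d)\subset P$. Because $P$ is integral, every cross-section of $P$ by a plane $x_i=\lambda$ is constant for $\lambda$ in a half-open unit interval between consecutive integers, so the thin orthant slab near $v$ automatically propagates outward to each axis-hyperplane at integer distance $1$; and because $P$ is a generic orthotope, no lower-dimensional ``whiskers'' can obstruct this propagation. Together these force $C(v,s)\subset P$. I would formalize this by induction on the number of axes along which the slab has not yet been fully extended, using the pure $d$-dimensionality at every point of $P$ guaranteed by genericity.

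With the local claim in hand, a single double count completes the argument. The set of pairs
\[
\mathcal{L} \;:=\; \bigl\{(v,\Omega_s)\ :\ v \in P\cap\mathbb{Z}^d,\ \Omega_s \subset \alpha(v)\bigr\}
\]
can be enumerated in two ways. Grouping by the first coordinate gives $|\mathcal{L}|=\sum_{v \in P\cap\mathbb{Z}^d}\mu_d(\alpha(v))$. Grouping instead by the associated unit cube $C(v,s)$, and noting that each cube in $U(P)$ has exactly $2^d$ lattice vertices, each of which points to that cube through exactly one orthant, gives $|\mathcal{L}|=2^d\,|U(P)|=2^d\,\mathrm{volume}(P)$. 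Equating the two counts and dividing by $2^d$ yields the theorem.
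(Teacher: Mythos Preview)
Your argument is correct. The local claim is the crux, and for an integral generic orthotope it is immediate once you observe that such a $P$ is exactly a finite union of lattice unit cubes $w+[0,1]^d$: then the tangent cone at a lattice point $v$ is, by construction, the union of those orthants $\Omega_s$ for which the adjacent cube $C(v,s)$ is one of the cubes in the union. Your propagation argument is more machinery than you need here, but it does the job.

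The paper's own justification is a one--line volume decomposition rather than a double count: it partitions $\mathbb{R}^d$ by the \emph{dual} grid of half--integer cubes $v+[-\tfrac12,\tfrac12]^d$ centered at lattice points, and observes that inside each such cube $P$ coincides with the tangent cone $v+\alpha(v)$, whence $\mathrm{vol}\bigl(P\cap(v+[-\tfrac12,\tfrac12]^d)\bigr)=\mu_d(\alpha(v))\cdot 2^{-d}$; summing gives the result directly. Your approach instead works on the \emph{primal} grid of unit cubes with lattice corners and counts incidences $(v,\Omega_s)$ two ways. The two are equivalent reformulations of the same combinatorial fact that $\mu_d(\alpha(v))$ counts adjacent unit cubes of $P$ at $v$; the paper's version is shorter because it avoids the bijection $\mathcal{L}\leftrightarrow\{\text{(cube, corner)}\}$, while yours makes the counting structure more explicit and would generalize more readily to weighted or signed variants.
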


The formula is easy to
understand:  For any point $v\in P\cap\mathbb{Z}^d$, the fraction $\mu(v)/2^d$ is the
volume of $(v+[-1/2,1/2]^d)\cap P$.  To compute the total volume, simply add all of these values.

If $P$ is an integral generic orthotope, let $n_\alpha$ denote the number of points
in $P\cap\mathbb{Z}^d$ of floral type $\alpha$.  Then we may write the formula above
as
$$\mathrm{volume}(P)=2^{-d}\sum_\alpha \mu_d(\alpha)n_\alpha,$$
where we sum over all congruence types of floral arrangements.

We also have a determinantal expression for the volume of a generic orthotope:

\begin{Thm}
Suppose $P$ is a rational generic orthotope.  For each vertex of $v$,
denote the coordinates where $v=(v_1,v_2,...,v_d)$.
Then
$$\mathrm{volume}(P)=\sum_v \tau(v)\prod_{i=1}^d v_i,$$
summing over all vertices $v\in P$ and $\tau(v)$ denotes the signed
volume of the floral arrangement at $v$.
\label{determinantal}
\end{Thm}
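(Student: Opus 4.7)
The plan is to interpret the right-hand side $V(P) := \sum_v \tau(v)\prod_i v_i$ as a valuation on rational orthogonal polytopes that agrees with Lebesgue volume on axis-aligned boxes; the theorem then follows by a standard inclusion-exclusion. After rescaling, I may assume $P$ is integral. It is convenient to extend the sum over every $v \in \mathbb{Z}^d$ by letting $\alpha_P(v)$ denote the tangent cone of $P$ at $v$ (the empty arrangement if $v \notin P$), and setting $\tau(v) := \tau_d(\alpha_P(v))$. A preliminary observation is that $\tau(v) = 0$ unless $v$ is a $0$-dimensional face of $P$: the tangent cone at a higher-dimensional genericity region factors as $\mathbb{R}^k \times \alpha'$ with $k \geq 1$, and summing the orthant signs over such a linear factor vanishes. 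Hence the extended sum agrees with the one stated in the theorem.

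The heart of the proof is the valuation identity
$$V(P \cup Q) + V(P \cap Q) = V(P) + V(Q)$$
for arbitrary integral orthogonal polytopes $P, Q$. This follows termwise from the pointwise identities $\alpha_{P \cup Q}(v) = \alpha_P(v) \cup \alpha_Q(v)$ and $\alpha_{P \cap Q}(v) = \alpha_P(v) \cap \alpha_Q(v)$ combined with the inclusion-exclusion rule for $\tau_d$ recorded in Section 2; one then multiplies through by $\prod v_i$ and sums over $v \in \mathbb{Z}^d$. I want to emphasize that the local arrangements $\alpha_{P \cap Q}(v)$ and $\alpha_{P \cup Q}(v)$ need not themselves be floral, even when $P$ and $Q$ are generic orthotopes, so the argument genuinely relies on $\tau_d$ being defined on arbitrary local orthotopal arrangements, not merely on floral ones.

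For the base case, let $B = \prod_{i=1}^d [a_i, b_i]$ be a pure $d$-dimensional axis-aligned box with integer coordinates. The only contributing points are its $2^d$ corners; at the corner with coordinates $v_i \in \{a_i, b_i\}$ the tangent cone is a single orthant, so $\tau(v) = \prod_i \epsilon_i$ where $\epsilon_i = +1$ if $v_i = a_i$ and $\epsilon_i = -1$ if $v_i = b_i$. Summing over corners collapses into the factored product $\prod_i(b_i - a_i) = \mathrm{volume}(B)$ (up to the overall sign convention fixed by the definition of $\tau$). Degenerate lower-dimensional boxes contribute zero on both sides.

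Finally, write $P = \bigcup_{v \in S}(v + [0,1]^d)$ as a union of unit cubes and expand $\mathrm{volume}(P)$ by inclusion-exclusion over the intersections $\bigcap_{w \in T}(w + [0,1]^d)$; every such intersection is either empty, a single translated unit cube, or a lower-dimensional box, all of which are handled by the preceding paragraph. Iterating the valuation identity from paragraph two produces the same inclusion-exclusion expansion for $V(P)$, and the theorem follows. The step I expect to require the most care is the pointwise identity for the tangent cones of $P \cap Q$ and $P \cup Q$ at lattice points, precisely because these arrangements can be genuinely non-floral and must be treated as arbitrary local orthotopal arrangements; once that bookkeeping is in place, everything else is formal.
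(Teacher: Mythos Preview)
Your argument is correct and reaches the same conclusion, but the route differs from the paper's in a way worth noting. The paper stays entirely inside the world of generic orthotopes: it writes $P$ as a union of unit cubes, expands each identity $1=\prod_i((v_i+1)-v_i)$ into $2^d$ monomials, sums over the cubes, and then identifies the resulting coefficient $c_w$ of $\prod_i w_i$ at each lattice point $w$ directly with $\tau(w)$. You instead extend the functional $V(P)=\sum_v\tau_d(\alpha_P(v))\prod_i v_i$ to \emph{all} integral orthogonal polytopes, prove the two-set valuation identity from the pointwise inclusion-exclusion of $\tau_d$, and then invoke inclusion-exclusion over the cube decomposition. The paper's calculation is more bare-handed and never needs to make sense of $\tau$ at non-floral tangent cones; your approach buys a cleaner modular structure (valuation plus base case) and, as you observe, actually proves the formula for arbitrary integral orthogonal polytopes rather than only generic ones. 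Your caution about the tangent-cone identities $\alpha_{P\cap Q}(v)=\alpha_P(v)\cap\alpha_Q(v)$ and the handling of lower-dimensional intersections is well placed: these are exactly the bookkeeping points that the paper's direct coefficient computation sidesteps, and they are where your extension of $\tau_d$ beyond floral arrangements genuinely earns its keep. The parenthetical about the overall sign is also apt; the paper's proof asserts $c_v=\tau(v)$ without exhibiting the sign check, and your factored computation $\prod_i(\epsilon_i(a_i)a_i+\epsilon_i(b_i)b_i)$ is in fact the cleanest way to pin it down.
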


\begin{proof}
First we show that the formula holds for an integral generic orthotope $P$.
Assuming this, we may subdivide $P$ as the union
$$P=\bigcup_{v\in S} (v+I^d),$$
where $S\subset\mathbb{Z}^d$ is finite.
The formula holds for each unit cube $v+I^d$, so we have
$$1=\mathrm{volume}(v+I^d)=\prod_{i=1}^d((v_i+1)-v_i).$$
If we sum these over all of $S$, then we obtain an expression
$$\mathrm{volume}(P)=\sum_{v\in P\cap \mathbb{Z}^d}c_v\prod_{i=1}^d v_i,$$
where $c_v$ denotes a coefficient that depends on $v$.  The key observation is that
the coefficient $c_v$ vanishes exactly when the floral arrangement at $v$ occupies an
even number of orthants.  Moreover, the floral arrangements of $P$ that are occupied by
an odd number of orthants coincide with the (floral) vertices of $P$.
One then verifies that the coefficient $c_v$ is indeed equal to $\tau(v)$ for
every floral vertex.  Since the formula (\ref{determinantal}) holds for every integral generic orthotope, it
also holds for every rational generic orthotope.  
\end{proof}

We have a similar formula for the Euler characteristic of a generic orthotope.
Suppose $P$ is a generic orthotope.  Define 
$$\sigma(P)=\sum_v \sigma(\alpha(v))=\sum_{\alpha}\sigma(\alpha)n_\alpha.$$
where the first sum is over all vertices $v\in P$ the second sum is over floral types $\alpha$.

\begin{Thm}
Suppose $P$ is a generic orthotope with Euler characteristic $\chi(P)$.  Then
$$\chi(P)=2^{-d}\sigma(P).$$
\label{eulercharthm}
\end{Thm}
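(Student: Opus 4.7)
The plan is to reduce to an integral generic orthotope, express $\chi(P)$ as a sum of local contributions indexed by lattice points, and then identify each such contribution with $2^{-d}\sigma(\alpha(v))$ via induction on the SPD structure of $\alpha(v)$.

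First, the shifting argument stated just after the density theorem allows me to perturb any generic orthotope to a combinatorially equivalent rational one, and then (by scaling) to an integral one, without affecting either $\chi$ or $\sigma$. So I may assume $P = \bigcup_{w \in S}(w + I^d)$ with $S \subset \mathbb{Z}^d$ finite. The cells of the canonical cubical CW structure on $P$ take the form $c(v',D) = \{v'_i\}_{i \notin D}\times\prod_{i \in D}[v'_i,v'_i+1]$, so $\chi(P) = \sum_{c(v',D)\subseteq P}(-1)^{|D|}$. Each such cell has exactly $2^{|D|}$ lattice-point vertices, so distributing the weight $(-1)^{|D|}$ evenly among them and regrouping by lattice point yields
$$\chi(P) \;=\; \sum_{v \in \mathbb{Z}^d} L(X_v),\qquad L(X) := \sum_{D\subseteq[d]}\frac{(-1)^{|D|}}{2^{|D|}}\,|\pi_D(X)|,$$
where $X_v := \{\eta \in \{0,1\}^d : v - \eta \in S\}$ encodes the local occupation pattern around $v$ and $\pi_D:\{0,1\}^d \to \{0,1\}^D$ is coordinate projection.

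The heart of the proof is the local identity
$$L(X_v) \;=\; \begin{cases}2^{-d}\sigma(\alpha(v)), & v \text{ a vertex of } P, \\ 0, & \text{otherwise,}\end{cases}$$
which I would prove by induction on the SPD of $\alpha(v)$. The degenerate cases $X_v = \emptyset$ and $X_v = \{0,1\}^d$ give $L(X_v) = 0$ since $\sum_{D\subseteq[d]}(-1)^{|D|} = 0$ for $d\geq 1$. If $\alpha(v) = \alpha_1 \wedge \alpha_2$ then the orthant-set factors as $X = X_1 \times X_2$, hence $|\pi_D(X)| = |\pi_{D_1}(X_1)|\cdot|\pi_{D_2}(X_2)|$ and $L(X) = L(X_1)L(X_2)$, matching $\sigma(\alpha) = \sigma(\alpha_1)\sigma(\alpha_2)$. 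If $\alpha(v) = \alpha_1 \vee \alpha_2$ then $X$ is a union of two ``strips'', and a direct inclusion-exclusion gives
$$|\pi_D(X)| \;=\; |\pi_{D_1}(X_1)|\cdot 2^{|D_2|} + 2^{|D_1|}\cdot|\pi_{D_2}(X_2)| - |\pi_{D_1}(X_1)|\cdot|\pi_{D_2}(X_2)|;$$
the first two summands contribute nothing to $L$ because $\sum_{D_i}(-1)^{|D_i|} = 0$, leaving $L(X) = -L(X_1)L(X_2)$, which matches $\sigma(\alpha) = -\sigma(\alpha_1)\sigma(\alpha_2)$. A lower-dimensional floral arrangement $\alpha = \mathbb{R}^{d-k}\times\alpha'$ has $X$ with a trivial factor $\{0,1\}^{d-k}$, and the same vanishing argument forces $L(X) = 0$, so non-vertex lattice points indeed contribute nothing. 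Summing the local identity over $v \in \mathbb{Z}^d$ then delivers $\chi(P) = 2^{-d}\sigma(P)$.

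The main obstacle is executing the SPD induction cleanly, particularly the disjunction step, where the projection cardinalities interact through the ``strip union'' inclusion-exclusion; the conjunction step is straightforward multiplicativity. Once the local identity is in place, the global formula follows by summing, and the reduction to the integral case is harmless because both sides are invariants of the face poset.
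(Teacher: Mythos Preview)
Your argument is correct and follows a genuinely different route from the paper. The paper proceeds valuation-theoretically: it first proves (via a lemma forbidding opposite half-spaces in a ``floral pair'') that whenever $P,Q,P\cap Q,P\cup Q$ are all generic orthotopes one has $\sigma(P)+\sigma(Q)=\sigma(P\cap Q)+\sigma(P\cup Q)$, by locally identifying $\sigma$ with the orthant-sign sum $\tau_d$, which trivially satisfies inclusion--exclusion; since $\sigma$ is then a valuation equal to $2^d$ on every axis-aligned box, it agrees with $2^d\chi$. Your approach instead computes $\chi$ directly from the cubical CW structure of an integral model, redistributes each cell's $(-1)^{|D|}$ over its $2^{|D|}$ lattice vertices, and matches the resulting local quantity $L(X_v)$ to $2^{-d}\sigma(\alpha(v))$ by induction on the SPD of $\alpha(v)$; the conjunction step is straight multiplicativity, and the disjunction step is exactly the strip inclusion--exclusion you wrote, with the two ``full'' terms killed by $\sum_{D_i\subseteq[d_i]}(-1)^{|D_i|}=0$. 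The paper's route is shorter and more conceptual but leans on the principle that a box-constant valuation is a multiple of the Euler characteristic; your route is self-contained, needing no such appeal, and produces a pointwise identity that makes the formula visibly local. Both arguments use the shifting observation recorded just after the density theorem to pass to a rational, hence integral, model without disturbing either side.
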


To establish this, we first prove that $\sigma$ is a valuation when restricted
to generic orthotopes:

\begin{Prp}
If all four of $\{P,Q,P\cap Q,P\cup Q\}$ are generic orthotopes, then
$\sigma(P)+\sigma(Q)=\sigma(P\cap Q)+\sigma(P\cup Q)$.
\label{genericquadruple}
\end{Prp}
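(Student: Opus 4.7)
My plan is to reduce the claim to a pointwise identity and then handle the delicate shared-hyperplane case separately. For each $v \in \mathbb{R}^d$ and each generic orthotope $X$, set $\phi_X(v) := \sigma(\alpha_X(v))$ when $v$ is a vertex of $X$ and $\phi_X(v) := 0$ otherwise, so that $\sigma(X) = \sum_v \phi_X(v)$ is a finite sum. Since tangent cones distribute over set operations, with $\alpha_{P\cap Q}(v) = \alpha_P(v)\cap\alpha_Q(v)$ and $\alpha_{P\cup Q}(v) = \alpha_P(v)\cup\alpha_Q(v)$ as local orthotopal arrangements, the proposition reduces to the pointwise identity
$$\phi_P(v) + \phi_Q(v) = \phi_{P\cap Q}(v) + \phi_{P\cup Q}(v)$$
at every $v \in \mathbb{R}^d$.

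The core calculation is the disjoint-support case. Let $I_P(v), I_Q(v) \subseteq [d]$ be the coordinate-hyperplane supports of $\alpha_P(v)$ and $\alpha_Q(v)$. When $I_P(v) \cap I_Q(v) = \emptyset$, the intersection and union are precisely the series and parallel SPD compositions $\alpha_P \wedge \alpha_Q$ and $\alpha_P \vee \alpha_Q$ on $I_P \cup I_Q$. Applying the multiplicativity identities $\sigma(\alpha \wedge \beta) = \sigma(\alpha)\sigma(\beta)$ and $\sigma(\alpha \vee \beta) = -\sigma(\alpha)\sigma(\beta)$ established in the local theory, the right-hand side of the pointwise identity collapses to $\sigma(\alpha_P)\sigma(\alpha_Q) - \sigma(\alpha_P)\sigma(\alpha_Q) = 0$ when $|I_P| + |I_Q| = d$ with neither equal to $d$; the remaining subcases (one support equals $[d]$ and the other is empty, or the combined support has fewer than $d$ elements) are trivial.

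The main obstacle is the shared-support case $I_P(v) \cap I_Q(v) \neq \emptyset$, where $\alpha_P(v) \cap \alpha_Q(v)$ is not expressible as a direct SPD composition, and the set-theoretic intersection $P \cap Q$ can pick up lower-dimensional residue along shared hyperplanes, threatening the pure $d$-dimensionality required of a generic orthotope. I would handle this by perturbation: shift each supporting hyperplane of $P$ that coincides with a supporting hyperplane of $Q$ by a small generic amount to obtain $P^\varepsilon$. By the combinatorial rigidity of generic orthotopes under small perturbations, $\sigma(P^\varepsilon) = \sigma(P)$, and by the preceding lemma $P^\varepsilon \cap Q$ and $P^\varepsilon \cup Q$ are generic orthotopes to which the disjoint-support identity applies. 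A local bookkeeping argument near each formerly shared hyperplane shows that every vertex of $P \cap Q$ (respectively $P \cup Q$) lying on such a hyperplane is replaced by a cluster of nearby vertices of $P^\varepsilon \cap Q$ (respectively $P^\varepsilon \cup Q$) whose $\sigma$-contributions sum to that of the original vertex; consequently $\sigma(P^\varepsilon \cap Q) + \sigma(P^\varepsilon \cup Q) = \sigma(P \cap Q) + \sigma(P \cup Q)$, and the identity for $(P^\varepsilon, Q)$ transfers to $(P, Q)$, completing the proof.
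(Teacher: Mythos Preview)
Your pointwise reduction and the disjoint-support computation are fine, but the perturbation step in the shared-support case carries a genuine gap. The assertion that the cluster of vertices of $P^\varepsilon\cap Q$ (or $P^\varepsilon\cup Q$) near a formerly shared hyperplane has total $\sigma$ equal to the $\sigma$ of the original vertex is precisely the kind of local valuation identity you are trying to prove. It would follow at once if one already knew $\sigma=2^d\chi$ (since Euler characteristic is stable under such perturbations), but that theorem is \emph{derived from} the present proposition, so invoking it here is circular. Absent that, the ``local bookkeeping'' is an unproved claim of difficulty comparable to the proposition itself; note in particular that a single vertex of $P\cap Q$ on a shared hyperplane can split into several vertices of $P^\varepsilon\cap Q$ whose individual bouquet signs bear no obvious relation to the original one.

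The paper avoids perturbation entirely. Its key device is a lemma: if $\alpha,\beta,\alpha\cap\beta,\alpha\cup\beta$ are all floral, then $\alpha$ and $\beta$ have no \emph{opposite} half-spaces (they may still share a half-space with the same sign). After a global coordinate reflection one may therefore take every half-space appearing in $\alpha_P(v)$ or $\alpha_Q(v)$ to be positive, whence $(-1)^s=1$ and the bouquet sign coincides with the signed-orthant count $\tau_d$ at each floral vertex, while $\tau_d$ vanishes at every non-vertex floral arrangement. But $\tau_d$ satisfies inclusion--exclusion trivially on \emph{all} local orthotopal arrangements, floral or not, so the pointwise identity for $\sigma$ drops out with no case split on shared versus disjoint support. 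The moral: rather than separating the supports geometrically, relate $\sigma$ to a cruder invariant ($\tau_d$) for which inclusion--exclusion is automatic, and use the structural lemma to guarantee that $\sigma$ and $\tau_d$ agree under the hypotheses.
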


We facilitate this by use of a lemma.

\begin{Lem}
Suppose $(\alpha,\beta)$ is a pair of floral arrangements such that
both of $\alpha\cap\beta$ and $\alpha\cup\beta$ are floral arrangements.  
Then $\alpha$ and $\beta$ have no opposite half-spaces.
\end{Lem}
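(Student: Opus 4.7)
The plan is to proceed by contradiction: I would assume that $\alpha$ uses the positive half-space $H_i$ while $\beta$ uses the opposite half-space $\overline{H_i}$ for some $i$, and derive that one of $\alpha\cap\beta$ or $\alpha\cup\beta$ fails to admit a read-once Boolean representation. Identify each floral arrangement with its read-once function in the indicator variables $x_j=[x\in H_j]$ of the supporting half-spaces; then $\alpha$ corresponds to a read-once function $a$ in which $x_i$ appears as a positive literal, while $\beta$ corresponds to a read-once function $b$ in which $x_i$ appears as $\overline{x_i}$.

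The workhorse is a structural observation about read-once functions. Tracing the path from the leaf $x_i$ up to the root of the SPD of $a$, at each internal node the ``partial value'' carried by $x_i$ is combined (via $\wedge$ or $\vee$) with a sibling subcircuit which, once the remaining variables are assigned an arbitrary $\sigma$, evaluates to $0$ or $1$; a simple induction then shows that the partial value remains in $\{0,1,x_i\}$. Thus every restriction $a|_\sigma$ is one of the three functions $0$, $1$, or $x_i$, and dually every $b|_\sigma$ is one of $0$, $1$, or $\overline{x_i}$. Consequently $(\alpha\cap\beta)|_\sigma$ equals $x_i$ only when $(a,b)|_\sigma=(x_i,1)$ and equals $\overline{x_i}$ only when $(a,b)|_\sigma=(1,\overline{x_i})$; analogously $(\alpha\cup\beta)|_\sigma$ realizes $x_i$ only at $(x_i,0)$ and $\overline{x_i}$ only at $(0,\overline{x_i})$.

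I would then exhibit assignments $\sigma,\sigma'$ witnessing both $(\alpha\cap\beta)|_\sigma=x_i$ and $(\alpha\cap\beta)|_{\sigma'}=\overline{x_i}$, which shows that the Boolean function of $\alpha\cap\beta$ uses the literal corresponding to $H_i$ in both polarities, violating the read-once property and contradicting the hypothesis that $\alpha\cap\beta$ is floral. To produce these assignments I would use the fact that each sibling subcircuit along the $x_i$-to-root path of $a$ (respectively $b$) is itself a read-once function and hence independently satisfiable or falsifiable, so the contextual value of $x_i$ in $a$ (and in $b$) can be tuned to any element of $\{0,1,x_i\}$ (respectively $\{0,1,\overline{x_i}\}$) that is not excluded by the structure of the ancestor chain.

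The main obstacle is precisely that certain contextual values are structurally excluded: a purely conjunctive occurrence of $x_i$ in $a$ cannot yield $a|_\sigma=1$, and a purely disjunctive one cannot yield $a|_\sigma=0$ (and similarly for $b$). When this obstruction blocks the witnesses for $\alpha\cap\beta$, the analogous witnesses for $\alpha\cup\beta$---which require the pairs $(x_i,0)$ and $(0,\overline{x_i})$---become available instead, since ``purely conjunctive'' and ``purely disjunctive'' are complementary restrictions. The plan thus concludes with a short case analysis on the four combinations ``purely conjunctive / not purely conjunctive'' for $x_i$ in $a$ and in $b$, verifying that in every case the mixed-polarity obstruction must appear in whichever of $\alpha\cap\beta$, $\alpha\cup\beta$ is not structurally ruled out, thereby completing the contradiction.
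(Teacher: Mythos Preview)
Your reduction to read-once Boolean functions has a genuine gap: it conflates ``$\alpha\cap\beta$ is a floral arrangement'' with ``the orthant indicator function $f_\alpha\wedge f_\beta$ is read-once.'' These are not the same thing. A floral arrangement must in particular be a union of closed full-dimensional orthants, and when $\alpha$ uses $H_i$ while $\beta$ uses $\overline{H_i}$, the set $\alpha\cap\beta$ can acquire a lower-dimensional piece along $\Pi_{i,0}$ that the Boolean function on $\{0,1\}^d$ simply does not see. Concretely, take $d=2$, $\alpha=H_1\cup H_2$ and $\beta=\overline{H_1}\cup H_2$. Then $a=x_1\vee x_2$ and $b=\overline{x_1}\vee x_2$, so $a\wedge b=x_2$ and $a\vee b=1$ are \emph{both} read-once; your restriction witnesses $(x_1,1)$, $(1,\overline{x_1})$, $(x_1,0)$, $(0,\overline{x_1})$ are all unattainable because $a$ and $b$ share the variable $x_2$ and neither can be forced to $0$. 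Yet the lemma still holds here, for a purely geometric reason your argument never touches: $\alpha\cap\beta=H_2\cup\Pi_{1,0}$ is not a union of orthants, hence not a local orthotopal arrangement at all.

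This example also exposes a second problem: your case analysis tacitly assumes the non-$x_i$ variables of $a$ and $b$ can be tuned independently, which fails whenever $\alpha$ and $\beta$ share supporting hyperplanes beyond the one on axis $i$. The paper's proof avoids both issues by arguing geometrically with the facets $f=\delta_i(\alpha)$ and $g=\delta_i(\beta)$ on $\Pi_{i,0}$: if their relative interiors meet, $\alpha\cap\beta$ is not pure $d$-dimensional (the ``thin'' obstruction your Boolean approach misses); if they are disjoint, $\alpha\cup\beta$ has two codimension-one genericity regions on $\Pi_{i,0}$ with opposite outward normals, which does translate into the mixed-polarity obstruction you were aiming for. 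To repair your approach you would need, at minimum, an additional argument covering the case where $f_\alpha\wedge f_\beta$ and $f_\alpha\vee f_\beta$ are both read-once but one of the underlying sets fails to be a union of orthants.
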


\begin{proof}
In seeking a contradiction, assume that $\alpha$ and
$\beta$ have opposite half-spaces.  Let $f$ and $g$ be facets that have opposite half-planes, 
and let $\Pi$ be the hyperplane containing $f$ and $g$.  Let $f^\circ$ and $g^\circ$
denote the relative interiors of $f$ and $g$ respectively with respect to $\Pi$.
Suppose first that $f^\circ\cap g^\circ$ is empty.  Then the disjunction $\alpha\vee\beta$ has
two genericity regions with opposite outward normal vectors, contradicting the assumption
that $\alpha\cap\beta$ and $\alpha\cup\beta$ are floral arrangements.  
On the other hand, suppose $f^\circ\cap g^\circ$ is non-empty.
Then the join $\alpha\wedge\beta$ is not a pure $d$-dimensional orthotope, so again this contradicts
the assumption that $(\alpha,\beta)$ is a floral pair.  
\end{proof}

Now we prove Proposition \ref{genericquadruple}.
This follows by relating $\sigma$ to the signed volume
function $\tau_d$.
Thus, suppose $\alpha$, $\beta$, $\alpha\cap\beta$, and $\alpha\cup\beta$ are floral
arrangements.  From the lemma,
we may assume without loss of generality that $\alpha$ and $\beta$ are both
represented by signed SPDs, where every edge is marked positive.
Being a sum
of signs of orthants, $\tau_d$ trivially satisfies the inclusion-exclusion rule.
In this case, since all of the half-spaces are positive, this implies that
$\sigma$ also satisfies the inclusion-exclusion rule.  One may verify that
$\sigma(B)=2^d$ for every pure $d$-dimensional axis-aligned box $B\subset\mathbb{R}^d$.
Thus, $\sigma$ is a valuation when restricted to generic orthotopes.
Since $\sigma$ is constant on axis-aligned boxes, it yields a multiple of the
Euler characteristic.

{\bf Example.}  Suppose $d=4$.  Then
the formula in Theorem \ref{eulercharthm} says
\begin{align*}
 & n_{\arxxxx}-n_{\arxxnxx}-n_{\arxxnxnx}+n_{\arxxxnx}+n_{\arxxuxxn} \\
& -n_{\arxxuxx}-n_{\arxxxnxn}+n_{\arxxnxnxn}+n_{\arxxnxxn}-n_{\arxxxxn} \\
& =2^4\cdot\chi(P). 
\end{align*}
We invite the reader to attempt to assemble 4-dimensional generic orthotopes
for experimentation.

\section{Conclusion and open problems}

Having established a theory of generic orthotopes, we pose several
questions.

\subsection{Generic polyconvex polytopes}

Define a {\it polyconvex polytope} as a subset of $\mathbb{R}^d$ that can be
formed as the union of finitely many convex polytopes. 
Define a define a {\it generic polyconvex polytope} as a polyconvex polytope
such that the tangent cone at every vertex is described by applying a read-once Boolean 
function to a set of $d$ half-spaces with distinct supporting hyperplanes.
Thus, in a generic polyconvex polytope, every vertex can be transformed via a linear transformation
to a floral vertex.
Clearly every face of a generic polyconvex polytope is a generic polyconvex polytope.  Is a similar
statement valid for cross-sections?  

\subsection{Discrete Morse theory}

Suppose $P$ is a polyconvex polytope.  As Bieri and Nef describe in
\cite{BN_1983} and \cite{BN_1985}, one may compute the volume
and Euler characteristic of $P$ using ``sweep plane'' algorithms.
Their algorithms compute the volume and Euler characteristic by adding local statistics
as the level sets of a linear functional (essentially a discrete analogue of a Morse function)
pass across the vertices of $P$.  Can we refine theses algorithms
to handle generic orthotopes or generic polyconvex polytopes specifically?
What effect does this have on the complexity of the problem of computing
the volume and Euler characteristic?

\subsection{Genericization}

Let $P$ be an orthogonal polytope.  The problem here is to study methods
of approximating $P$ by a generic orthotope.  How can we accomplish this in a general way?  
An obvious place
to start is to analyze local orthotopal arrangements which are not floral arrangements.
For example, it is not hard to imagine ``perturbing'' one or more of the supporting planes 
in the degenerate vertices appearing in Figure \ref{false3dvertices} to obtain a pair of nearby floral vertices.
More generally, this author imagines ``blow ups'' along
singular (non-floral) faces, obtained by systematically uniting $P$ with generic orthotopes
which ``cover'' the singular faces.  What are the most efficient algorithms for
generizicing a given orthogonal polytope?

\subsection{Simplicial orthotopal arrangements}

Is every simplicial orthotopal arrangement floral?  
We have seen that the face lattice of every floral arrangement coincides with that
of a simplex.  We ask whether or not the converse is also valid.  Thus, given an orthotopal
arrangement $\alpha\subset\mathbb{R}^d$ such that the face poset of $\alpha$ coincides
with a simplex, we wonder whether there is necessarily a read-once Boolean function which
defines $\alpha$.  An affirmative answer would significantly strengthen this author's thesis
that generic orthotopes represent an elementary generalization of the cube to
general orthogonal polytopes.

\subsection{Flag orthotopes}

Suppose $P$ is a $d$-dimensional convex polytope with
face lattice $\mathcal{L}(P)$ and 
$\mathcal{L}{(P)}\xrightarrow{\ \phi\ }\mathbb{R}$.
For each complete
flag $(\emptyset\subset f_0\subset f_1\subset f_2\subset ...\subset f_{d-1}\subset P)$ 
in $\mathcal{L}(P)$ (where $\dim(f_i)=i$), this yields a point 
$$(\phi(f_0),\phi(f_1),\phi(f_2),...,\phi(f_{d-1}))\in\mathbb{R}^d,$$
and the assembly of these points may or may not coincide with the vertices
of a generic orthotope.
Which pairs $(P,\phi)$ does this yield the vertices of a generic orthotope
that respects the face lattice $\mathcal{L}(P)$?  Can we develop efficient algorithms
for realizing $\mathcal{L}(P)$ by a generic orthotope?

Figure \ref{flagorthotope} shows an example of this idea when $d=3$.  
The left part of this figure represents a drawing of a Schlegel diagram of a 
3-dimensional convex polytope, say $P$.  Notice that every vertex, edge, and facet
is marked by a value $\phi(f_i)$.
The right part of this figure illustrates an axonometric
projection of the ``flag orthotope'' given by the data $(P,\phi)$.  
Notice that the polygonal regions on the right
are marked with distances to the three coordinate planes and this yields
a 3-dimensional generic orthotope which ``displays'' the entire face lattice of $P$.

\begin{figure} 
\centering 
\includegraphics[width=\textwidth]{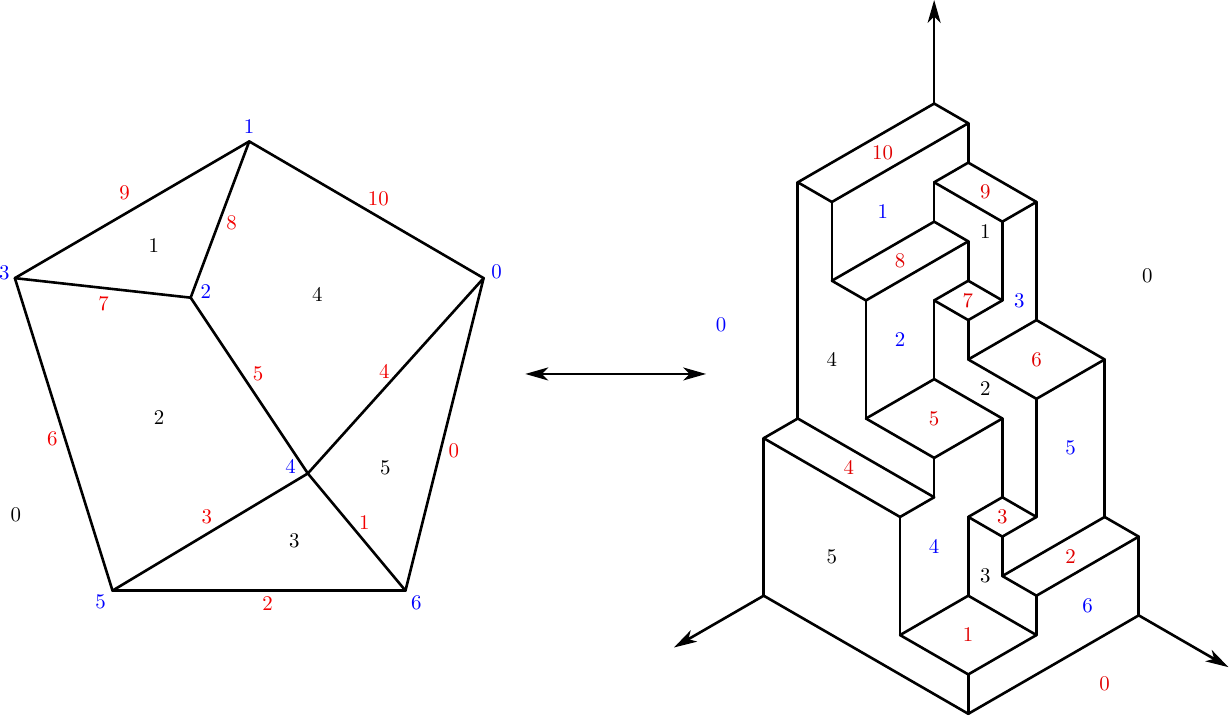}
\caption{A flag orthotope.}
\label{flagorthotope}
\end{figure}

\subsection{Shadows of 4D generic orthotopes}

As this author noticed in \cite{richter_rectangulations}, one may construct 3-dimensional flag
orthotopes
from generic rectangulations.  This idea is easy to conceive due to the extremely limited number
of 3-dimensional floral vertices.  What about the next higher dimension?  Thus, whereas a generic
rectangulation represents a 2-dimensional projection of a flag orthotope of a non-separable planar
map, what are the analogous configurations when considering 3-dimensional projections of a 4-dimensional
generic orthotope?  Due to the number of different types of 4-dimensional floral vertices,
this project appears to be quite large.

\subsection{Coxeter complexes}

The original motivation of this work came from a desire to realize Coxeter complexes
by orthogonal polytopes, and we may now state this problem precisely.
Suppose $(G,S)$ is a Coxeter system of finite type, where 
$S=\{\sigma_1,\sigma_2,...,\sigma_d\}$ is the set of generating involutions of $G$
and $d$ is the rank of $(G,S)$.
Define a {\it generic orthotopal realization} of $(G,S)$ as a generic orthotope 
$P=P_{(G,S)}\subset\mathbb{R}^{d}$
such that there is a bijection $\phi:G\rightarrow \mathcal{L}_0(P)$ (the vertices of $P$)
where two vertices $v_1=\phi(g_1)$, $v_2=\phi(g_2)$ of $P$ are connected by an edge parallel
to the $i$th coordinate axis whenever $g_1=\sigma_ig_2$.

\begin{figure} 
\centering 
\includegraphics[width=0.5\textwidth]{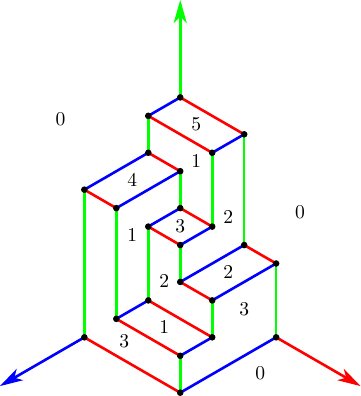}
\caption{The Coxeter complex of type $A_3$.}
\label{axonometric_a3}
\end{figure}

Figure \ref{axonometric_a3} displays an example of this idea.  
The underlying Coxeter
group $G$ is the symmetric group on 4 letters and the generators are $S=\{b,g,r\}$,
subject to the relations
$$\{b^2,g^2,r^2,(bg)^3,(br)^2,(gr)^3\}.$$  
The figure depicts an axonometric projection
of a realization of $(G,S)$ as a generic orthotope.  As in our discussion of flag orthotopes
above, the polygonal regions are marked by distances to coordinate planes in $\mathbb{R}^3$.
Notice in particular that there are 24 vertices, corresponding to the elements of $G$.
One also notices that all of the edges sharing a common color are mutually parallel
and that the 1-dimensional skeleton comprises a Cayley graph of $G$ with the generators $\{b,g,r\}$.
Although this example was first conceived in the context of graph drawing
(as in \cite{eppstein_2008} and \cite{EM_2014} for example), 
we notice that this representation is faithful to the entire
Coxeter complex of the corresponding Coxeter system.  Thus, for every $k\in\{0,1,2,3\}$,
the $k$-dimensional faces of this polytope correspond to
cosets of the Coxeter subgroups generated by $k$ elements of $\{b,g,r\}$.

This author is interested in realizing every finite Coxeter complex by a generic orthotope.
Ideally, one would like to see a uniform system for realizing each of the  three infinite
sequences $A_d$, $BC_d$, $D_d$ of spherical Coxeter systems.  Aside from formulating
it precisely, this author has made little progress
on this problem.  For example, it is possible to realize every finite rank-3 Coxeter system
by a generic orthotope. (This is a good exercise for the interested reader.)  However, the problem is
daunting as soon as $d\geq 4$.
For various reasons, this author suspects that no realization
of the $D_4$ Coxeter complex as a generic orthotope exists.  Since $D_4$ occurs as a subdiagram
of $D_d$ for all $d\geq 5$ and of the exceptional series $E_d$, a negative result
would imply that none of these particular Coxeter systems has a realization as a generic
orthotope.  How should we handle infinite Coxeter systems?

\end{document}